\declaretheoremstyle[headfont=\normalsize\normalfont\bfseries,notefont=\mdseries,
notebraces={(}{)},bodyfont=\normalfont\itshape,postheadspace=0.5em]{italstyle}
\declaretheorem[style=italstyle,name=Theorem]{theorem}
\declaretheorem[style=italstyle,name=Lemma,sibling=theorem]{lemma}
\declaretheorem[style=italstyle,name=Corollary,sibling=theorem]{corollary}
\declaretheorem[style=italstyle,name=Question]{question}
\declaretheorem[style=italstyle,name=Conjecture,sibling=question]{conjecture}
\newcommand{\abs}[1]{\left|#1\right|}
\newcommand{\bd}{\partial}
\newcommand{\C}{\mathbb{C}}
\renewcommand{\d}{\mathrm{d}}
\newcommand{\R}{\mathbb{R}}
\newcommand{\set}[1]{\left\{#1\right\}}
\newcommand{\Z}{\mathbb{Z}}
\renewcommand\section{\@startsection{section}{1}{0pt}{-3.5ex \@plus -1ex \@minus -.2ex}{2.3ex \@plus.2ex}{\centering\bfseries}}
\renewcommand{\subsection}{\@startsection{subsection}{2}\z@{.5\linespacing\@plus.7\linespacing}{-.5em}{\normalfont\bfseries}}
\author{Dylan Cant}
\email{dylan@dylancant.ca}
\address{Institut de mathématique d'Orsay, Université Paris-Saclay, Bâtiment 307, rue Michel Magat, F-91405 Orsay Cedex, France}
\date{\today}
\begin{document}
\title{Hamiltonian linking and Symplectic packing}
\begin{abstract}
  In this short note, we make an observation relating symplectic packings of the standard symplectic ball by two sets and Hamiltonian linking.
\end{abstract}
\maketitle

\section{Introduction}
\label{sec:introduction}

\subsection{Statement of results}
\label{sec:statement-results}

Denote by $G$ be the group of compactly supported Hamiltonian diffeomorphisms of $\R^{2n}$, i.e., the group of time-1 maps of compactly supported Hamiltonian isotopies.

Let us say that two disjoint compact subsets $K_{1},K_{2}\subset \R^{2n}$ are \emph{Hamiltonian unlinked} provided there is $\varphi\in G$ such that:
\begin{equation*}
  \varphi(K_{1})\text{ and }\varphi(K_{2})\text{ lie on opposite sides of a hyperplane}.
\end{equation*}
Otherwise we say they are \emph{Hamiltonian linked}.

There is an obvious obstruction to being Hamiltonian unlinked, namely the two sets must be unlinkable via a compactly supported smooth isotopy.

Denote by $c(K_{i})$ the \emph{spectral capacity of $K_{i}$}, which is simply the Floer theory version\footnote{Generating function capacities are known to equal Floer homology capacities; see \cite{viterbo-arXiv-1996,hermann-BSMF-2004}.} of Viterbo's generating function capacity introduced in \cite{viterbo-mathann-1992}; in this paper we follow the conventions of \cite{alizadeh-atallah-cant-arXiv-2024,cant-zhang-arXiv-2024}. This is a normalized symplectic capacity. Then:
\begin{theorem}\label{theorem:main}
  Suppose $K_{1},K_{2}\subset B(a)$ are compact disjoint subsets of the standard symplectic ball (of capacity $a>0$). Either:
  \begin{enumerate}
  \item $K_{1},K_{2}$ are Hamiltonian linked,
  \item $c(K_{1})+c(K_{2})\le a$;
  \end{enumerate}
  in other words, if two sets are Hamiltonian unlinked, then they obey a packing inequality; if two sets break the packing inequality, then they must be Hamiltonian linked.
\end{theorem}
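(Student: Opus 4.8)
The plan is to prove the equivalent assertion that if $K_1,K_2$ are Hamiltonian unlinked then $c(K_1)+c(K_2)\le a$; the dichotomy in the statement is just its contrapositive. First I would put the separating hyperplane into standard position. By hypothesis there is $\varphi\in G$ and a hyperplane $H$ with $\varphi(K_1)$ and $\varphi(K_2)$ in the two open half-spaces bounded by $H$. Writing $z_1=q_1+\sqrt{-1}\,p_1$ and choosing a unitary $U$ with $U(H)=\set{q_1=0}$, I replace $K_i$ by $U(K_i)$ and $\varphi$ by $U\varphi U^{-1}$; this stays in $G$ (conjugation by a symplectomorphism of a compactly supported Hamiltonian flow is again one), preserves $B(a)$, and preserves $c$, so I may assume $H=\set{q_1=0}$, $\varphi(K_1)\subset\set{q_1<0}$, $\varphi(K_2)\subset\set{q_1>0}$. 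Since $K_1\cup K_2\subset B(a)$, the set $W:=\varphi(B(a))$ contains both $\varphi(K_i)$ and has $c(W)=c(B(a))=a$, so by monotonicity
\[
c(K_1)+c(K_2)=c(\varphi(K_1))+c(\varphi(K_2))\le c\bigl(W\cap\set{q_1<0}\bigr)+c\bigl(W\cap\set{q_1>0}\bigr).
\]
Thus the whole theorem reduces to a \emph{packing estimate across a hyperplane}: if $W$ is the image of $B(a)$ under a compactly supported Hamiltonian diffeomorphism, then $c(W^-)+c(W^+)\le c(W)$, where $W^{\pm}:=W\cap\set{\pm q_1>0}$. (This is sharp: for $W=B(a)$ cut in half by $\set{q_1=0}$ both pieces are half-balls of capacity $a/2$, each contained in a disk $\times\,\C^{n-1}$ of area $a/2$.)

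The heart of the argument is this estimate, and I would establish it through the Floer-homological/generating-function description of $c$ used in the cited papers: $c(U)$ is computed by the defining spectral invariant of Hamiltonians compactly supported in $U$. The decisive geometric fact is that $\set{q_1=0}$ is a coisotropic hypersurface whose characteristic foliation consists of the $p_1$-lines and hence has \emph{no closed leaves}; consequently a Hamiltonian that on a thin slab about $\set{q_1=0}$ depends on $q_1$ alone has no fast periodic orbits there, and (by an integrated maximum principle) Floer trajectories cannot cross such a monotone slab. Given $\varepsilon>0$, pick $H^{\pm}$ compactly supported in $W^{\pm}$ whose spectral invariant exceeds $c(W^{\pm})-\varepsilon$, and assemble a single Hamiltonian $G$ supported in $W$ which equals $H^-$ on $W^-$ and equals $H^+$ raised by the constant $c(W^-)-\varepsilon$ on $W^+$, interpolated across $\set{q_1=0}$ through a function of $q_1$ only. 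The hyperplane-separation is exactly what makes the constant shift admissible, and the Floer complex of $G$ splits along the slab into its left and right parts, the right part carrying actions shifted up by $c(W^-)-\varepsilon$; tracking the fundamental-class selector through this splitting gives spectral invariant at least $c(W^-)+c(W^+)-2\varepsilon$, while it is at most $c(W)$ because $G$ is supported in $W$. Letting $\varepsilon\to0$ yields the estimate.

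The main obstacle is precisely the last paragraph. One must verify carefully that the constant shift introduces no spurious short orbits — this is where the non-compactness of the characteristics of a hyperplane is used essentially, and it is consistent with the theorem failing for hypersurfaces with closed characteristics, e.g.\ a spherical shell enclosing a small ball, which is genuinely linked — and that the fundamental-class spectral invariant of $G$ really registers the shifted right-hand contribution rather than being dominated by $H^-$ or by the interpolation region; making the splitting of $HF(G)$ and the action bookkeeping precise is the technical core. A second point worth stressing is that $\varphi$ can distort $B(a)$ drastically (its shadow on the $(q_1,p_1)$-plane can be made arbitrarily large by symplectic shears), so no elementary projection/area argument can replace the Floer input: the estimate must be proved for an arbitrary ball-image $W$, which is why Step~1 is organized around that formulation.
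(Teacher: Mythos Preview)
Your reduction to the estimate $c(W^-)+c(W^+)\le c(W)$ for $W=\varphi(B(a))$ is fine, but the Hamiltonian $G$ you describe cannot exist. If $G$ equals $H^{+}+c$ on $W^{+}$ with $c=c(W^{-})-\varepsilon>0$, then near $\partial W\cap\overline{W^{+}}$ (where $H^{+}$ vanishes) $G$ equals the positive constant $c$; an interpolation in the slab that is ``a function of $q_1$ only'' then forces $G$ to be nonzero on the entire unbounded slab $\{0<q_1<\delta\}\subset\R^{2n}$, so $G$ is not compactly supported, let alone supported in $W$. Multiplying by a cutoff supported in $W$ destroys the $q_1$-only dependence and creates new one-periodic orbits along $\partial W$, so both the ``no fast orbits'' and the ``no crossing'' claims collapse. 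More conceptually: in $\R^{2n}$ there is no way to add a constant to a compactly supported Hamiltonian and stay in that class, so the action shift you want to engineer is simply not available here. Even if one granted a splitting of the Floer complex across the slab, the fundamental-class selector would record the \emph{maximum} of the two sides, not a sum --- that is precisely the content of the max formula --- and a sum can only appear if the constant shift is genuine.

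The paper's proof bypasses all of this by combining two known inputs: the max formula for hyperplane-separated supports, and the equality $\gamma(B(a))=a$, i.e.\ $c(\psi_t)+c(\psi_t^{-1})\le a$ for any isotopy $\psi_t$ supported in $B(a)$. The trick is to pair $\varphi_{1,t}$ not with $\varphi_{2,t}$ but with its \emph{inverse}: after conjugating by the unlinking element $g$, the supports of $g\varphi_{1,t}g^{-1}$ and $g\varphi_{2,t}^{-1}g^{-1}$ lie on opposite sides of the hyperplane, so the max formula gives $c(\varphi_{1,t}\varphi_{2,t}^{-1})\ge c(\varphi_{1,t})$ and symmetrically $c(\varphi_{2,t}\varphi_{1,t}^{-1})\ge c(\varphi_{2,t})$. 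Since $\psi_t:=\varphi_{1,t}\varphi_{2,t}^{-1}$ is supported in $B(a)$ and $\psi_t^{-1}=\varphi_{2,t}\varphi_{1,t}^{-1}$, the spectral-diameter bound yields $c(\varphi_{1,t})+c(\varphi_{2,t})\le a$; taking suprema over isotopies supported near $K_1,K_2$ finishes. The sum arises not from an action shift but from the $\gamma$-norm inequality applied to a single isotopy and its inverse --- that is the idea missing from your approach.
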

\emph{Remark}. The same result holds if $B(a)$ is replaced by a domain $\Omega$ satisfying $a=c(\Omega)=\gamma(\Omega)$, where $\gamma(\Omega)$ is the \emph{spectral diameter} (i.e., the diameter of the space of isotopies supported in $\Omega$ with respect to Viterbo's spectral metric). In \cite{alizadeh-atallah-cant-arXiv-2024} it is shown that $a=c(B(a))=\gamma(B(a))$.

\begin{corollary}\label{corollary:linking-small-ball}
  Let $K\subset \bd B(a)$ be a compact set with capacity $c(K)=b$ (there exist examples where $K$ is a Lagrangian and $b=a/2$). Then the smaller ball $B(c)\subset B(a)$ is Hamiltonian linked with $K$ if $c>a-b$.\hfill$\square$
\end{corollary}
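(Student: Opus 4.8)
The plan is to obtain Corollary~\ref{corollary:linking-small-ball} as an essentially immediate consequence of Theorem~\ref{theorem:main}. Two small technical points have to be dealt with first: the set $K$ sits on the boundary sphere $\bd B(a)$ rather than in the open ball, and $B(c)$ is open whereas Theorem~\ref{theorem:main} is stated for compact subsets. I would absorb both by shrinking the inner ball and enlarging the ambient one: fix $c'<c$ and $\epsilon>0$. Since $c'<c<a<a+\epsilon$, the closed ball $\overline{B(c')}$ is contained in $B(a)$, hence disjoint from $\bd B(a)\supseteq K$, and both $K$ and $\overline{B(c')}$ are disjoint compact subsets of the open ball $B(a+\epsilon)$.

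Next I would apply Theorem~\ref{theorem:main} to the pair $K,\overline{B(c')}\subset B(a+\epsilon)$ — either directly, as $B(a+\epsilon)$ is a standard symplectic ball, or through the Remark, as $c(B(a+\epsilon))=\gamma(B(a+\epsilon))=a+\epsilon$. Using $c(K)=b$ and $c(\overline{B(c')})=c'$ (the latter by monotonicity and normalization of the spectral capacity), the dichotomy reads: either $K$ and $\overline{B(c')}$ are Hamiltonian linked — which is a property of the pair in $\R^{2n}$, not of the ambient ball — or $b+c'\le a+\epsilon$. Assuming $c>a-b$, i.e.\ $b+c>a$, I would pick $c'\in(a-b,c)$ and then $\epsilon\in(0,\,b+c'-a)$, so that $b+c'>a+\epsilon$; the packing alternative is then impossible, and $K,\overline{B(c')}$ must be Hamiltonian linked.

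Finally I would upgrade this to the statement about $B(c)$ using the obvious monotonicity of unlinkedness under inclusion: were there a $\varphi\in G$ placing $\varphi(K)$ and $\varphi(B(c))$ on opposite sides of a hyperplane, then, as $\varphi(\overline{B(c')})\subset\varphi(B(c))$, the same $\varphi$ and the same hyperplane would exhibit $K$ and $\overline{B(c')}$ as Hamiltonian unlinked, contradicting the previous step. Hence $B(c)$ is Hamiltonian linked with $K$ whenever $c>a-b$, as claimed.

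I do not anticipate a genuine obstacle: the corollary is a direct specialization of Theorem~\ref{theorem:main}, and the only thing requiring care is the bookkeeping above — in particular the fact that ``Hamiltonian linked'' passes from a set to any set containing it but not the other way around, which is exactly why one shrinks $B(c)$ to $\overline{B(c')}$ rather than enlarging. The strict inequality $c>a-b$ leaves precisely the slack needed for the two perturbation parameters. (The parenthetical assertion that $b=a/2$ is realized by a Lagrangian $K\subset\bd B(a)$ — for instance the preimage in the Hopf sphere of a monotone Lagrangian of $\C P^{n-1}$ — is a separate computation of that Lagrangian's spectral capacity and is not needed for the linking statement.)
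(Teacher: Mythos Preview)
Your proposal is correct and matches the paper's intent: the paper gives no explicit argument for Corollary~\ref{corollary:linking-small-ball} beyond the $\square$, treating it as an immediate consequence of Theorem~\ref{theorem:main}. Your $\epsilon$/$c'$ bookkeeping to place $K\subset\bd B(a)$ inside an honest open ball and to reduce to compact sets is exactly the routine adjustment one makes to invoke the theorem, and your monotonicity remark for linking is the right way to pass back from $\overline{B(c')}$ to $B(c)$.
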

In dimension $n=1$, this follows from smooth topology. In higher dimensions $n>1$, the sets $K$ and $B(c)$ can be chosen to be smoothly unlinked, and so it produces a camel type theorem: {\itshape the ball $B(c)$ cannot be isotoped to $v+B(c)$ through balls of capacity $c$ in the complement of $K$, if $v$ is sufficiently large.}

Let us say that a compact domain $\Omega\subset \C^{n}$ is \emph{boundary minimal} if:
\begin{equation*}
  c(\Omega\setminus U)< c(\Omega)
\end{equation*}
for all open sets $U$ such that $U\cap \bd \Omega\ne \emptyset$. Then we recover a result of \cite{ziltener-imrn-2016} (in the case $\Omega=B(a)$); his argument used displacement energy while ours uses the packing versus linking theorem.
\begin{corollary}\label{cor:gamma-is-c-implies-bm}
  Any domain $\Omega$ satisfying $c(\Omega)=\gamma(\Omega)$ is boundary minimal.
\end{corollary}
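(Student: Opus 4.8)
The plan is to deduce boundary minimality from Theorem~\ref{theorem:main} --- in the form stated in the Remark following it, with $B(a)$ replaced by $\Omega$ --- by exhibiting, for every ``bad'' open set $U$, a small symplectic ball inside $\Omega\cap U$ that is Hamiltonian unlinked from $\Omega\setminus U$. So fix an open set $U$ with $U\cap\bd\Omega\ne\emptyset$, write $a=c(\Omega)=\gamma(\Omega)$, and set $K_{1}=\Omega\setminus U$, which is a compact subset of $\Omega$. Pick $p\in U\cap\bd\Omega$. Since $\Omega$ is a domain (the closure of its interior) and $U$ is open, the set $\mathrm{int}(\Omega)\cap U$ is a nonempty open subset of $\C^{n}$ accumulating at $p$, so it contains a symplectically embedded closed ball $K_{2}$; then $c(K_{2})>0$ because $c$ is a normalized capacity and $K_{2}$ contains a ball of positive capacity, while $K_{2}\cap K_{1}=\emptyset$ because $K_{2}\subset U$.

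Next I would check that $K_{1},K_{2}$ are Hamiltonian unlinked. Shrinking if necessary, we may assume $K_{2}$ lies in a round ball $B'$ with $\overline{B'}\subset U$, hence $\overline{B'}\cap K_{1}=\emptyset$. Since $2n\ge 2$, the complement $\R^{2n}\setminus K_{1}$ is path connected, so there is a smooth embedded arc from the centre of $B'$ to a point $q$ in the half-space $\set{x_{1}>M}$, where $M=1+\max_{K_{1}}x_{1}<\infty$, and the arc may be taken disjoint from $K_{1}$. A sufficiently thin (bounded) tubular neighbourhood of this arc, still disjoint from $K_{1}$, carries a compactly supported Hamiltonian isotopy that drags $B'$ along the arc; let $\varphi\in G$ be its time-$1$ map. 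Then $\varphi(K_{1})=K_{1}\subset\set{x_{1}\le M-1}$ and $\varphi(K_{2})\subset\varphi(B')\subset\set{x_{1}>M}$, so the hyperplane $\set{x_{1}=M-\tfrac12}$ separates $\varphi(K_{1})$ from $\varphi(K_{2})$; thus $K_{1}$ and $K_{2}$ are Hamiltonian unlinked.

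Applying Theorem~\ref{theorem:main} to the disjoint compact sets $K_{1},K_{2}\subset\Omega$, the unlinking just established rules out alternative~(1), so alternative~(2) holds, namely $c(K_{1})+c(K_{2})\le a$. Therefore
\[
  c(\Omega\setminus U)=c(K_{1})\le a-c(K_{2})<a=c(\Omega),
\]
which is exactly the condition defining boundary minimality. As $U$ was an arbitrary open set meeting $\bd\Omega$, the corollary follows.

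The only genuine work is the construction of $\varphi$: one must verify that a Darboux ball can be dragged along a prescribed embedded arc inside a prescribed open subset of $\R^{2n}$ by a compactly supported Hamiltonian diffeomorphism --- a classical fact (build the Hamiltonian supported near the tube so that its flow translates the core of the tube), but one worth recording precisely. A secondary, purely point-set matter is the extraction of an embedded symplectic ball from $\mathrm{int}(\Omega)\cap U$, which is immediate once ``domain'' is understood to mean that $\Omega$ equals the closure of its interior.
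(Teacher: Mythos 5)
Your overall strategy is exactly the paper's: the published proof is a two-line sketch stating that inside $U\cap\Omega$ one can find a small ball which is Hamiltonian unlinked from $\Omega\setminus U$, after which the packing inequality of Theorem \ref{theorem:main} (in the form of the Remark, using $c(\Omega)=\gamma(\Omega)=a$) gives $c(\Omega\setminus U)\le a-c(\text{ball})<c(\Omega)$. Your write-up fills in that outline, and the reduction to the packing inequality, the positivity of $c(K_{2})$, and the tube-dragging construction of $\varphi$ are all fine.

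However, the step you yourself single out as ``the only genuine work'' is supported by a false assertion: it is not true that the complement of a compact set $K_{1}\subset\R^{2n}$ is path connected just because $2n\ge 2$ (a sphere separates $\R^{2n}$). This is not merely a cosmetic slip: if $\Omega$ had a cavity and $p$ were taken on the boundary of that cavity, any ball $K_{2}\subset U\cap\mathrm{int}(\Omega)$ near $p$ would lie in a \emph{bounded} complementary component of $K_{1}=\Omega\setminus U$, and then $K_{1},K_{2}$ are genuinely Hamiltonian linked --- a compactly supported $\varphi$ keeps $\varphi(K_{2})$ in a bounded complementary component of $\varphi(K_{1})$, and such a set can never be pushed to the far side of a hyperplane (a ray from $\varphi(K_{2})$ into the separating open half-space must exit the bounded component through $\varphi(K_{1})$). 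The repair in the intended situation is to use the boundary point, not general position: choose $K_{2}$ inside a small ball $B_{\epsilon}(p)\subset U$ centered at $p\in U\cap\bd\Omega$; since $K_{1}=\Omega\setminus U$ is disjoint from $U$, all of $B_{\epsilon}(p)$ avoids $K_{1}$ and contains points of $\R^{2n}\setminus\Omega$, so $K_{2}$ can be joined to the complement of $\Omega$ inside $B_{\epsilon}(p)$ and from there dragged to infinity through $\R^{2n}\setminus\Omega\subset\R^{2n}\setminus K_{1}$, provided $p$ is accessible from the unbounded component of $\R^{2n}\setminus\Omega$. For $\Omega=B(a)$, or any domain with connected complement, this is automatic and your tube-dragging Hamiltonian completes the proof; for domains with cavities the accessibility of $p$ is exactly the point that needs an argument (one the paper's own one-line proof also leaves implicit), so your arc must be routed using the geometry of $\bd\Omega$ rather than a blanket connectivity claim.
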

\begin{proof}[Proof of Corollary \ref{cor:gamma-is-c-implies-bm}]
  If $U$ is open and has non-empty intersection with $\bd \Omega$, then inside of $U\cap \Omega$ one can find a small ball which is unlinked with $\Omega\setminus U$. The result then follows from the packing inequality.
\end{proof}
Boundary minimality is interesting as it implies the following closing-type lemma (see \cite{irie-JMD-2015,chaidez-datta-prasad-tanny-JMD-2024,xue-arxiv-2022} for similar results):
\begin{lemma}\label{lemma:closing}
  Suppose that $\Omega\subset \C^{n}$ is a symplectically embedded Liouville domain which is boundary minimal. Let $\alpha$ be the intrinsic contact form on $\bd\Omega$ (using the Liouville domain structure of $\Omega$). Then, for any non-empty open subset $V\subset \bd\Omega$ and any non-negative non-constant function $f$ supported in $V$, there is a Reeb orbit $\gamma$ for the Reeb flow associated to $e^{-sf}\alpha$ which passes through $V$, for some $s\in (0,1]$. Moreover:
  \begin{equation*}
    \text{period of }\gamma+\Gamma(\gamma)<c(\Omega)
  \end{equation*}
  where $\Gamma$ is the cohomology class $[\lambda_{\C^{n}}-\lambda_{\Omega}]\in H^{1}_{\mathrm{dR}}(\Omega)$. In particular, if the embedding is exact, one can bound the period of $\gamma$ from above and conclude there is an orbit of $\alpha$ with period $\le c(\Omega)$ passing through any point of $\bd\Omega$.
\end{lemma}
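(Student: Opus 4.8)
The plan is to adapt the Irie‑type argument for closing lemmas, substituting boundary minimality for the usual displacement‑energy input. Write $\alpha=\lambda_{\Omega}|_{\bd\Omega}$, and use the flow of the Liouville vector field of $\lambda_{\Omega}$ to fix a collar $\bd\Omega\times(-\epsilon,\epsilon)\hookrightarrow\C^{n}$ on which $\lambda_{\Omega}=e^{t}\alpha$, with $\set{t\le 0}$ lying in $\Omega$. For $s\in[0,1]$ set
\[
  \Omega_{s}=\Omega\setminus\set{(x,t): x\in\bd\Omega,\ -sf(x)<t},
\]
i.e.\ push $\bd\Omega$ inward by $sf$ along the Liouville flow. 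Then $\Omega_{0}=\Omega$; $\Omega_{s'}\subset\Omega_{s}$ for $s\le s'$; the pullback of the intrinsic contact form of $\bd\Omega_{s}$ under the graph map $\iota_{s}\colon x\mapsto(x,-sf(x))$ is $e^{-sf}\alpha$, which is a contact form on $\bd\Omega$; and, since $f$ is non‑negative and non‑constant, $\set{f>0}$ is a non‑empty open subset of $V$, whence $\Omega_{s}=\Omega\setminus U_{s}$ for an open set $U_{s}\subset\C^{n}$ meeting $\bd\Omega$. Boundary minimality then gives $c(\Omega_{s})<c(\Omega)$ for all $s\in(0,1]$, while $s\mapsto c(\Omega_{s})$ is continuous, with value $c(\Omega)$ at $s=0$.

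The key input is the spectrality of $c$: for a domain $W$ with smooth contact‑type boundary, $c(W)=\int_{\delta}\lambda_{\C^{n}}$ for some closed characteristic $\delta\subset\bd W$. Applying this to $W=\Omega_{s}$ and transporting $\delta$ back to $\bd\Omega$ by $\iota_{s}$ — which is homotopic in $\Omega$ to the inclusion, hence preserves periods of closed forms — yields a Reeb orbit $\gamma_{s}$ of $e^{-sf}\alpha$ with
\[
  (\text{period of }\gamma_{s})+\Gamma(\gamma_{s})=\int_{\delta}\lambda_{\C^{n}}=c(\Omega_{s})<c(\Omega),
\]
using $\int_{\delta}\lambda_{\C^{n}}=\int_{\delta}\lambda_{\Omega}+\Gamma(\delta)$ and $\int_{\delta}\lambda_{\Omega}=\int_{\gamma_{s}}\iota_{s}^{*}\lambda_{\Omega}=\int_{\gamma_{s}}e^{-sf}\alpha$. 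Thus $\gamma_{s}$ always satisfies the asserted action bound, and it remains only to arrange that $\gamma_{s}$ meets $\set{f>0}\subset V$ for some $s$.

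Suppose this fails: for every $s\in(0,1]$, no Reeb orbit of $e^{-sf}\alpha$ through $V$ has period $+\,\Gamma<c(\Omega)$. Then each $\gamma_{s}$ above avoids $V$, hence avoids $\operatorname{supp}f$; since $f$ vanishes on a neighbourhood of $\bd\Omega\setminus V$, there $e^{-sf}\alpha=\alpha$ and the Reeb vector fields agree, so $\gamma_{s}$ is in fact a closed characteristic of $\bd\Omega$ disjoint from $V$, and (as $\iota_{s}$ is the identity near it) $c(\Omega_{s})=\int_{\gamma_{s}}\lambda_{\C^{n}}$ lies in the $s$‑independent set
\[
  S_{0}=\set{\int_{\delta}\lambda_{\C^{n}} : \delta\subset\bd\Omega\setminus V\text{ a closed characteristic}}\cap(0,c(\Omega)).
\]
But $S_{0}$ is contained in the action spectrum of $\bd\Omega$, a measure‑zero subset of $\R$; in particular $S_{0}$ is totally disconnected, so the continuous function $s\mapsto c(\Omega_{s})$ must be constant on $(0,1]$, and by continuity at $s=0$ this constant equals $c(\Omega)$ — contradicting $c(\Omega_{s})<c(\Omega)$. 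Hence the desired orbit $\gamma$ exists for some $s\in(0,1]$.

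For the exact case, $\Gamma\equiv 0$, so $\gamma$ has ordinary period $<c(\Omega)$; applying the above with $V=V_{k}$ running through a neighbourhood basis of a given $p\in\bd\Omega$ and $f=f_{k}$ chosen to converge to $0$ in $C^{\infty}$, the forms $e^{-s_{k}f_{k}}\alpha$ tend to $\alpha$ in $C^{\infty}$, and the resulting orbits $\gamma_{k}$ have period equal to $c(\Omega_{s_{k}})$, which lies in $[\epsilon_{0},c(\Omega))$ for a fixed $\epsilon_{0}>0$ (each $\Omega_{s_{k}}$ contains a fixed interior ball); a $C^{\infty}$‑convergent subsequence then produces a closed Reeb orbit of $\alpha$ through $p$ of period $\le c(\Omega)$. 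I expect the genuine obstacle to be the spectrality statement for $c$, as it is the only step that uses Floer theory; the deformation bookkeeping, the soft topological points, and the final compactness argument should be routine.
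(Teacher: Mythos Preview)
Your argument is correct and follows essentially the same route as the paper: define the shrunken domains $\Omega_{s}$, invoke spectrality of $c$, and argue that if the selected orbits all avoided $V$ then $s\mapsto c(\Omega_{s})$ would be a continuous function into a totally disconnected set, hence constant, contradicting boundary minimality. You supply more detail than the paper does---in particular, the compactness argument for the final ``exact'' sentence, which the paper merely asserts---but the underlying strategy is the same.
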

This is proved in \S\ref{sec:proof-lemma-closing}. As explained to the author by S.\@~Matijevi\'c, the last sentence of the lemma, together with the results of \cite{abbondandolo-kang-duke-2022,abbondandolo-benedetti-GAFA-2023}, implies the following characterization of uniformly convex Zoll domains:
\begin{theorem}\label{theorem:convex-zoll}
  A uniformly convex domain $\Omega$ is Zoll if and only if it is boundary minimal.
\end{theorem}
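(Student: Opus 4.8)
The plan is to derive Theorem~\ref{theorem:convex-zoll} from Lemma~\ref{lemma:closing}, together with two imported facts about uniformly convex domains: that the spectral capacity equals the minimal action of a closed characteristic on the boundary, and the local systolic optimality of Zoll contact forms from \cite{abbondandolo-benedetti-GAFA-2023}. As preparation, translate $\Omega$ so that $0$ lies in its interior; then the radial Liouville vector field is transverse to $\bd\Omega$, so $\Omega$ is a Liouville domain exactly embedded in $\C^{n}$ (hence $\Gamma=0$ in Lemma~\ref{lemma:closing}), its boundary carries the contact form $\alpha=\lambda_{\C^{n}}|_{\bd\Omega}$, and radial projection is a contactomorphism onto the standard contact sphere. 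Let $T$ denote the minimal period (equivalently, minimal action) of a closed Reeb orbit of $\alpha$, which is well defined by convexity. For convex bodies the spectral capacity coincides with the minimal action of a closed characteristic, so every uniformly convex domain $\Omega'$ satisfies $c(\Omega')=T_{\min}(\bd\Omega')$ (see \cite{abbondandolo-kang-duke-2022}); I will use this both for $\Omega$ and for the small perturbations of $\Omega$ appearing below.

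For the direction \emph{boundary minimal $\Rightarrow$ Zoll}, suppose $\Omega$ is boundary minimal. The last sentence of Lemma~\ref{lemma:closing} then produces, through every point $p\in\bd\Omega$, a closed Reeb orbit $\gamma_{p}$ of $\alpha$ with period at most $c(\Omega)=T$. Since $T$ is the minimal period, $\gamma_{p}$ has period exactly $T$, so the time-$T$ Reeb flow fixes $p$; as $p$ was arbitrary, the time-$T$ Reeb flow is the identity, i.e.\ $\Omega$ is Zoll. This direction is little more than bookkeeping around Lemma~\ref{lemma:closing}.

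The converse I would prove by contradiction. Suppose $\Omega$ is uniformly convex and Zoll but not boundary minimal, so there is an open $U$ meeting $\bd\Omega$ with $c(\Omega\setminus U)\ge c(\Omega)$, hence $c(\Omega\setminus U)=c(\Omega)$ by monotonicity of $c$. Shrinking $U$ to a small ball around a point of $U\cap\bd\Omega$ preserves this equality, so assume $U$ is a small ball around some $p\in\bd\Omega$. Next, construct a smooth uniformly convex domain $\Omega'\subsetneq\Omega$ whose boundary agrees with $\bd\Omega$ outside $U$, is pushed strictly inward on a small region inside $U$, and is $C^{3}$-close to $\bd\Omega$; then $\Omega\setminus\Omega'\subset U$ and $\mathrm{Vol}(\Omega')<\mathrm{Vol}(\Omega)$. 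From $\Omega\setminus U\subset\Omega'\subset\Omega$ and monotonicity, $c(\Omega)=c(\Omega\setminus U)\le c(\Omega')\le c(\Omega)$, so the first paragraph forces $T_{\min}(\bd\Omega')=c(\Omega')=c(\Omega)=T$. On the other hand, under radial identification the contact forms of $\bd\Omega'$ and $\bd\Omega$ become $C^{3}$-close contact forms on the standard sphere, the second of which is Zoll; so \cite{abbondandolo-benedetti-GAFA-2023} gives $\rho_{\mathrm{sys}}(\bd\Omega')\le\rho_{\mathrm{sys}}(\bd\Omega)$. Since the contact volume of the boundary of a convex domain is, by Stokes' theorem, proportional to the Euclidean volume of the domain, and the minimal periods coincide, this reads $T^{n}/\mathrm{Vol}(\Omega')\le T^{n}/\mathrm{Vol}(\Omega)$, contradicting $\mathrm{Vol}(\Omega')<\mathrm{Vol}(\Omega)$. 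Hence $\Omega$ is boundary minimal.

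The main obstacle is this converse: the two imported results --- the identification $c(\Omega')=T_{\min}(\bd\Omega')$ for convex $\Omega'$, and the $C^{3}$-local systolic optimality of Zoll forms --- must be matched with the elementary denting of $\Omega$ inside $U$, and one must check that the dent can be made $C^{3}$-small while still strictly decreasing the volume and keeping the excised region inside $U$. This last point is exactly where uniform convexity enters, since a $C^{3}$-small perturbation of a uniformly convex hypersurface is again uniformly convex. (One could instead try to route the converse through Corollary~\ref{cor:gamma-is-c-implies-bm} by proving $\gamma(\Omega)=c(\Omega)$ for Zoll convex $\Omega$, but the denting argument seems more direct.)
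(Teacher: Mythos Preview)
Your proof is correct and follows essentially the same route as the paper: Lemma~\ref{lemma:closing} together with $c(\Omega)=T_{\min}(\bd\Omega)$ from \cite{abbondandolo-kang-duke-2022} for the forward direction, and the local systolic maximality of Zoll forms from \cite{abbondandolo-benedetti-GAFA-2023} for the converse. The only difference is packaging: the paper argues the converse directly (any small inward dent lowers the volume, hence the systole, hence $c$), while you run the same logic by contradiction and make the smoothing step $\Omega\setminus U\subset\Omega'\subsetneq\Omega$ explicit; this is a point the paper leaves implicit.
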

\begin{proof}
  If a uniformly convex domain is boundary minimal, Lemma \ref{lemma:closing}, and the fact that $c(\Omega)$ is the shortest period of a Reeb orbit (proved in \cite{abbondandolo-kang-duke-2022}) imply that $\Omega$ is Zoll. Conversely, \cite{abbondandolo-benedetti-GAFA-2023} proved that a uniformly convex Zoll domain is a local maximizer for the systolic ratio. Shrinking the domain slightly will decrease the volume, and so the systole must also decrease. Since small variations remain uniformly convex, it follows (again from \cite{abbondandolo-kang-duke-2022}) that $c(\Omega)$ decreases, as desired.
\end{proof}

Our next result concerns the areas of $J$-holomorphic disks with boundary on Lagrangians. Here $J$ is an $\omega$-tame (and standard at infinity) almost complex structure (we call such a $J$ \emph{admissible}).
\begin{corollary}\label{corollary:lagrangian-hbar}
  Let $K\subset \bd B(a)$ be a subset with capacity $b$. Then each Lagrangian $L$ in the interior of $B(a)$ either:
  \begin{enumerate}
  \item bounds a $J$-holomorphic disk with area at most $a-b$,
  \item is linked with $K$,
  \end{enumerate}
  where $J$ is any admissible almost complex structure.
\end{corollary}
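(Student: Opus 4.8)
The plan is to combine Theorem~\ref{theorem:main} with a quantitative form of Gromov's theorem that closed Lagrangians in $\C^{n}$ bound holomorphic disks. The reduction is immediate: if $L$ is not linked with $K$, then $L$ and $K$ are disjoint compact subsets of $\overline{B(a)}$ which are Hamiltonian unlinked, so alternative (1) of Theorem~\ref{theorem:main} fails and hence $c(L)+c(K)\le a$, i.e.\ $c(L)\le a-b$. (That $K\subset\bd B(a)$ rather than the open ball is harmless: apply Theorem~\ref{theorem:main} in $B(a+\varepsilon)$ and let $\varepsilon\to0$, or invoke the remark following it, using $c(\overline{B(a)})=\gamma(\overline{B(a)})=a$.) So the whole statement reduces to the claim that for every closed Lagrangian $L\subset\C^{n}$ and every admissible $J$ there is a non-constant $J$-holomorphic disk with boundary on $L$ of symplectic area at most $c(L)$; equivalently $\hbar_{J}(L)\le c(L)$, where $\hbar_{J}(L)$ denotes the least symplectic area of such a disk.

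To prove this capacity--area inequality I would run a Floer-theoretic argument for $L$ in the same spirit as the proof of Theorem~\ref{theorem:main}: against a Hamiltonian realizing the spectral capacity $c(L)$, the Lagrangian boundary condition on $L$ produces a Floer solution whose action is bounded by $c(L)$, and then --- following the device, going back to Chekanov's bound $\hbar_{J}(L)\le e(L)$, of switching off the Hamiltonian perturbation and rescaling the strip-like ends --- one passes to a Gromov limit. Because $\pi_{2}(\C^{n})=0$ no sphere bubbling can occur, so the limit is a (possibly nodal) configuration of $J$-holomorphic disks with boundary on $L$ of total area at most $c(L)$, a non-constant component of which is the disk we want; its area is then $\le c(L)\le a-b$. (Alternatively, $\hbar_{J}(L)\le c(L)$ can likely be cited from, or extracted from the methods of, \cite{alizadeh-atallah-cant-arXiv-2024,cant-zhang-arXiv-2024}, in which case the corollary is formally immediate from Theorem~\ref{theorem:main}.)

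The step I expect to be the main obstacle is precisely this degeneration: taking the Gromov limit as the Hamiltonian is switched off without letting energy escape, identifying the surviving non-constant component as a genuine disk with boundary on $L$, and checking that the action really is bounded by $c(L)$ and not by $c(L)$ plus an error term one must then absorb. By comparison, the reduction to the capacity--area inequality and the vanishing of sphere bubbles (from $\pi_{2}(\C^{n})=0$) are routine.
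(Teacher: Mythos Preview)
Your proposal is correct and follows the same route as the paper: reduce via Theorem~\ref{theorem:main} to the inequality $\hbar_{J}(L)\le c(L)$, which the paper simply cites from \cite{hermann-BSMF-2004} or \cite{cant-zhang-arXiv-2024} --- exactly the option you flag in your parenthetical. Your additional degeneration sketch for this inequality is extra detail the paper does not provide, but the overall strategy is the same.
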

When $n=1$ the result is trivial as every subset of $B(a)$ is smoothly linked with $K$, or $b=0$. On the other hand, when $n>1$, the result is symplectic in nature, as $K$ and $L$ are smoothly unlinked unless $K\ne \bd B(a)$. We prove Corollary \ref{corollary:lagrangian-hbar} in \S\ref{sec:proof-corollary-lag-hbar}.

\emph{Remark}. Perhaps, when $n>1$, every Lagrangian $L$ in the interior of $B(a)$ is unlinked with some $K$ with $c(K)\ge a/2$, in which case one proves the conjecture about the areas of disks being less than $a/2$; see \S\ref{sec:further-questions}.

Our next result discusses a case when sets are automatically unlinked.
\begin{lemma}\label{lemma:starshaped-unlinked}
  If $K_{1}$ and $K_{2}$ are disjoint compact sets, each symplectomorphic to a starshaped set, then they are Hamiltonian unlinked.
\end{lemma}
This is shown in \S\ref{sec:proof-lemma-starshaped-unlinked}. Theorem \ref{theorem:main} then yields:
\begin{corollary}
  Suppose $a=c(\Omega)=\gamma(\Omega)$. Two disjoint sets $K_{1},K_{2}\subset \Omega$, each of which is symplectomorphic to a starshaped set, satisfy the packing inequality $c(K_{1})+c(K_{2})\le a$.\hfill$\square$
\end{corollary}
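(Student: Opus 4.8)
The plan is to combine Lemma~\ref{lemma:starshaped-unlinked} with the version of Theorem~\ref{theorem:main} recorded in the Remark immediately following that theorem. First I would note that the pair $K_{1},K_{2}$ satisfies the hypotheses of Lemma~\ref{lemma:starshaped-unlinked}: they are disjoint and compact, and each is symplectomorphic to a starshaped set. That lemma then produces $\varphi\in G$ carrying $K_{1}$ and $K_{2}$ to opposite sides of a hyperplane; in other words, the pair $(K_{1},K_{2})$ is Hamiltonian \emph{un}linked.

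Next I would invoke Theorem~\ref{theorem:main}, in the more general form valid for a domain $\Omega$ with $a=c(\Omega)=\gamma(\Omega)$ (this is exactly the content of the Remark after the theorem, which in turn rests on the identity $a=c(B(a))=\gamma(B(a))$). Theorem~\ref{theorem:main} presents the dichotomy that either $K_{1},K_{2}$ are Hamiltonian linked or $c(K_{1})+c(K_{2})\le a$. Since the first alternative has just been excluded, the second must hold, which is precisely the asserted packing inequality.

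There is no genuine obstacle here: the statement is a formal consequence of Lemma~\ref{lemma:starshaped-unlinked} and Theorem~\ref{theorem:main}, and the only bookkeeping is to confirm the compatibility of hypotheses — that the $K_{i}$ meet the compactness requirement of Lemma~\ref{lemma:starshaped-unlinked}, and that $\Omega$ is of the type to which the Remark extends Theorem~\ref{theorem:main}. This is exactly why the corollary is stated with a $\square$; if one worried about a "starshaped set" being taken open rather than compact, one would simply first pass to a slightly shrunken closed starshaped subset of each $K_{i}$, noting that monotonicity of $c$ under inclusion can only decrease the left-hand side of the inequality.
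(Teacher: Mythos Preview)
Your proposal is correct and matches the paper's approach exactly: the paper simply writes ``Theorem~\ref{theorem:main} then yields:'' before stating the corollary with a $\square$, i.e., apply Lemma~\ref{lemma:starshaped-unlinked} to conclude the $K_i$ are Hamiltonian unlinked, then invoke the dichotomy of Theorem~\ref{theorem:main} in the generality of the Remark to obtain the packing inequality.
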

In fact, the proof of Theorem \ref{theorem:main} shows that two disjoint sets $K_{1},K_{2}$ which obey the maximum formula for spectral invariants also obey the packing inequality. It is known that the class of pairs of subsets of $\R^{2n}$ which obey the maximum formula contains all pairs of the images of two embeddings of Liouville domains which are:
\begin{enumerate}
\item exact \cite{humiliere-le-roux-seyfaddini-2016}, or
\item $\pi_{1}$-injective on the boundary \cite{ganor-tanny-AGT-2023},
\end{enumerate}
We note that \cite{humiliere-le-roux-seyfaddini-2016} also proves the the maximum formula for the generating function capacity for a pair of subsets of $\R^{2n}$ which are separated by a hyperplane.


\subsection{Discussion of proof}
\label{sec:discussion-proof}

The key ingredients used to deduce Theorem \ref{theorem:main} are:
\begin{enumerate}
\item The fact that the \emph{spectral diameter of a ball equals its capacity}, i.e., $\gamma(B(a))=c(B(a))=a$, proved in \cite{alizadeh-atallah-cant-arXiv-2024}.
\item The maximum formula for spectral invariants, proved in \cite{humiliere-le-roux-seyfaddini-2016,tanny-JSG-2022,ganor-tanny-AGT-2023}. 
\end{enumerate}
To deduce Corollary \ref{corollary:lagrangian-hbar}, one also appeals to the result in \cite{hermann-BSMF-2004,cant-zhang-arXiv-2024}.

\subsection{Further questions}
\label{sec:further-questions}

\subsubsection{Areas of holomorphic disks}
\label{sec:areas-holom-disks}

Corollary \ref{corollary:lagrangian-hbar} suggests the question:
\begin{question}
  Is every Lagrangian $L$ in the interior of $B(a)$ Hamiltonian unlinked with some compact subset $K\subset \bd B(a)$ satisfying $c(K)\ge a/2$?
\end{question}
As explained above, if the answer is ``yes,'' then it would solve the following conjecture (compare also with \cite{cieliebak-mohnke-inventiones-2018}):
\begin{conjecture}\label{conjecture:lagrangian-hbar}
  Every Lagrangian $L$ in $B^{2n}(a)$ bounds $J$-holomorphic disks of area at most $a/2$, for all admissible $J$, if $n>1$.
\end{conjecture}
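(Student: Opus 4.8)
\emph{Proof strategy.} The plan is to derive the conjecture from an affirmative answer to the Question, using Corollary \ref{corollary:lagrangian-hbar} as the bridge. Fix a Lagrangian $L$ in the interior of $B(a)$ and an admissible almost complex structure $J$. Suppose one can exhibit a compact set $K\subset \bd B(a)$ with $c(K)\ge a/2$ that is Hamiltonian unlinked with $L$. Then alternative (2) of Corollary \ref{corollary:lagrangian-hbar} is ruled out, so alternative (1) must hold, and $L$ bounds a $J$-holomorphic disk of area at most $a-c(K)\le a-a/2=a/2$. Since $J$ was an arbitrary admissible almost complex structure, this is exactly the conjectured bound. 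Hence the entire problem reduces to answering the Question affirmatively.

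To attack the Question, I would begin from the half-capacity Lagrangian $\Lambda\subset \bd B(a)$ with $c(\Lambda)=a/2$ underlying Corollary \ref{corollary:linking-small-ball}, and consider the family of its images $\Lambda_{g}=g(\Lambda)$ under a group of symplectomorphisms preserving $\bd B(a)$, such as $U(n)$; by invariance of the capacity each $\Lambda_{g}$ again satisfies $c(\Lambda_{g})=a/2$. The goal is to show that, given any interior Lagrangian $L$, at least one member $\Lambda_{g}$ can be placed on the far side of a hyperplane from $L$ by a single compactly supported Hamiltonian diffeomorphism. Since $L$ is compact it lies in $B(a')$ for some $a'<a$, and I would try to use the collar $B(a)\setminus B(a')$ together with the freedom in choosing $g$ to engineer such a separating isotopy.

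The hard part is this unlinking step, and two obstacles stand in the way. First, there is a necessary a priori constraint visible already from Theorem \ref{theorem:main}: if some Lagrangian satisfied $c(L)>a/2$, then $c(L)+c(K)>a$ for every $K$ with $c(K)\ge a/2$, forcing $L$ and $K$ to be Hamiltonian linked; thus the route can only succeed after establishing the bound $c(L)\le a/2$ for every Lagrangian $L\subset B(a)$. Second, even granting this bound, Lemma \ref{lemma:starshaped-unlinked} does not apply, since $L$ and $\Lambda_{g}$ are nowhere dense and hence not symplectomorphic to starshaped domains; a genuinely new unlinking criterion for pairs of Lagrangians is required. I expect the crux to be excluding the possibility that some exotic Lagrangian $L$ is simultaneously linked with every candidate $\Lambda_{g}$—ruling this out, presumably either by a soft linking argument or by an explicit separating isotopy adapted to each $L$, is where the real difficulty lies.
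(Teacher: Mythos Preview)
This statement is labelled a \emph{Conjecture} in the paper and is not proved there; the paper merely observes (in \S\ref{sec:areas-holom-disks}) that an affirmative answer to the preceding Question would imply it via Corollary~\ref{corollary:lagrangian-hbar}. Your first paragraph reproduces exactly this reduction, so on that level your proposal agrees with the paper's own discussion. But neither you nor the paper supplies a proof: the unlinking step you isolate is precisely the open Question, and your second and third paragraphs are a sketch of possible attacks rather than an argument.

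One remark on your ``obstacle~1'': you note that the strategy can only succeed once one knows $c(L)\le a/2$ for every Lagrangian $L\subset B(a)$. But the bridge from $c(L)$ to areas of $J$-holomorphic disks is exactly the Hermann/Cant--Zhang input used to prove Corollary~\ref{corollary:lagrangian-hbar}; depending on the precise form of that relation, the inequality $c(L)\le a/2$ may already be equivalent to (or at least as hard as) the conjecture itself. So this obstacle is not just a technical precondition but potentially a restatement of the target, which is worth flagging when assessing whether the route via the Question genuinely buys anything.
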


\subsubsection{Minimal sets}
\label{sec:minimal-sets}

An interesting direction raised by Corollary \ref{corollary:linking-small-ball} is that $\bd B(a)$ is a \emph{minimal compact set} for the spectral capacity: any compact subset has a strictly smaller spectral capacity. Moreover, as explained above, the domain $B(a)$ is a boundary minimal Liouville domain, which implies some version of the closing lemma (see Lemma \ref{lemma:closing}).
\begin{question}
  Which compact sets in $\R^{2n}$ are minimal in the above sense? Which domains $\Omega$ are boundary minimal?
\end{question}
It is not hard to see using \cite{laudenbach-sikorav-IMRN-1994} and \cite{hermann-BSMF-2004,cant-zhang-arXiv-2024} that any compact connected Lagrangian submanifold is minimal, since removing a small ball will change its capacity from a positive number to zero. However, the boundary of a polydisk is never minimal.

Turning now to Zoll domains, Theorem \ref{theorem:convex-zoll} suggests the question:
\begin{question}\label{question:spectral-characterization}
  Does every (uniformly convex) Zoll domain $\Omega$ satisfy $\gamma(\Omega)=c(\Omega)$?
\end{question}
If the answer is ``yes,'' then $\gamma(\Omega)=c(\Omega)$ gives an alternative ``spectral'' characterization of when a (uniformly convex) domain is Zoll. It is known by the work of \cite{ginzburg-gurel-mazzucchelli-AIHPAN-2021,matijevic-arXiv-2024,matijevic-arXiv-2025} that a uniformly convex domain is Zoll if and only if the first $n$ Gutt-Hutchings capacities agree (see \cite{gutt-hutchings-agt-2018} for the definition of these capacities). This raises a variant of Question \ref{question:spectral-characterization}:
\begin{question}
  Does every (uniformly convex) domain $\Omega$ with equal first and $n$th Gutt-Hutchings capacities satisfy the equality $\gamma(\Omega)=c(\Omega)$?
\end{question}

The notion of minimality also raises the question of how much the capacity drops when an open set is removed? On this topic, we prove the following:
\begin{theorem}\label{theorem:toric}
  If $0<k<n$, and $0\le b_{1}\le b_{2}\le a$ then: $$K_{k,b_{1},b_{2}}:=\bd B(a)\cap \set{b_{1}\le \sum_{i=1}^{k} \pi\abs{z_{i}}^{2}\le b_{2}}$$ satisfies $c(K_{k,b_{1},b_{2}})=\min\set{b_{2},a-b_{1}}$.
\end{theorem}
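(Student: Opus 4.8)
The plan is to sandwich $c(K_{k,b_1,b_2})$ between $\min\set{b_2,a-b_1}$ from above and from below. For the upper bound: on $K_{k,b_1,b_2}$ one has $\pi\abs{z_1}^2\le\sum_{i=1}^k\pi\abs{z_i}^2\le b_2$, so $K_{k,b_1,b_2}\subset\overline{B^2(b_2)}\times\C^{n-1}$, and since $\sum_{i=1}^n\pi\abs{z_i}^2=a$ forces $\pi\abs{z_n}^2\le\sum_{i=k+1}^n\pi\abs{z_i}^2=a-\sum_{i=1}^k\pi\abs{z_i}^2\le a-b_1$, also $K_{k,b_1,b_2}\subset\C^{n-1}\times\overline{B^2(a-b_1)}$. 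Passing to the slightly larger open cylinders and using monotonicity of $c$ together with $c(B^2(r)\times\C^{n-1})=r$ gives $c(K_{k,b_1,b_2})\le\min\set{b_2,a-b_1}$. For the lower bound it is convenient to first apply the coordinate permutation $(z',z'')\mapsto(z'',z')$ of $\C^k\times\C^{n-k}=\C^n$, a linear symplectomorphism which (using $\pi\abs{z''}^2=a-\pi\abs{z'}^2$ on $\bd B(a)$) carries $K_{k,b_1,b_2}$ onto $K_{n-k,a-b_2,a-b_1}$; since $(a-b_2)+(a-b_1)\le a$ precisely when $b_1+b_2\ge a$, it is enough to prove $c(K_{k,b_1,b_2})\ge b_2$ under the hypothesis $b_1+b_2\le a$, where $\min\set{b_2,a-b_1}=b_2$.

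To this end, write $\mu_k=\pi\sum_{i=1}^k\abs{z_i}^2$ and $\mu=\pi\sum_{i=1}^n\abs{z_i}^2$, which Poisson-commute. Fix a small $\epsilon>0$, a bump $\chi\colon\R\to[0,1]$ supported in $(a-\epsilon,a+\epsilon)$ with $\chi\equiv1$ near $a$, and a smooth $h\colon[0,\infty)\to(-\infty,0]$ which vanishes on $[0,b_1-\epsilon]$ (or near $0$ when $b_1=0$) and on $[b_2,\infty)$, descends to some value $-M<0$, and then equals $t\mapsto t-b_2$ on a subinterval just below $b_2$. Put $H:=h(\mu_k)\,\chi(\mu)$: this is compactly supported, with $\operatorname{supp}H\subset\set{b_1-\epsilon<\mu_k<b_2}\cap\set{\abs{\mu-a}<\epsilon}$, which lies inside any prescribed neighbourhood of $K_{k,b_1,b_2}$ once $\epsilon$ is small. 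So it suffices to show $c(\phi_H)\ge b_2$.

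Because $\mu_k$ and $\mu$ commute, the flow of $H=F(\mu_k,\mu)$ rotates the last $n-k$ coordinates at speed $\partial_\mu F$ and the first $k$ at speed $\partial_{\mu_k}F+\partial_\mu F$. On $\set{\mu=a}$ one has $\chi=1$ and $\partial_\mu\chi=0$, so where moreover $h'(\mu_k)=1$ the flow produces a $1$-periodic orbit winding once around the first $k$ coordinates and not at all around the last $n-k$; its action is $\mu_k-h(\mu_k)=\mu_k-(\mu_k-b_2)=b_2$, so these orbits form a Morse--Bott family of action exactly $b_2$. To extract $c(\phi_H)\ge b_2$ I would argue: $h\le0$ and $0\le\chi\le1$ give $H\ge h(\mu_k)$ pointwise, hence $c(\phi_H)\ge c(\phi_{h(\mu_k)})$ by monotonicity of spectral invariants; the time-$1$ map of $h(\mu_k)$ on $\C^n$ is the time-$1$ map of $h(\pi\sum_{i=1}^k\abs{z_i}^2)$ on $\C^k$ times the identity on $\C^{n-k}$, so stabilization gives $c(\phi_{h(\mu_k)})=c(\phi_h,\C^k)$; and $\phi_h$ is supported in the spherical shell $\set{b_1-\epsilon<\pi\sum_{i=1}^k\abs{z_i}^2<b_2}\subset\C^k$ and realizes the capacity $b_2$ of that shell (the standard radial-Hamiltonian computation behind $c(B(r))=r$). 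Combined with the upper bound this gives $c(K_{k,b_1,b_2})=b_2=\min\set{b_2,a-b_1}$.

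The main obstacle is exactly this final step: verifying $c(\phi_h,\C^k)=b_2$ amounts to checking that the extremal Morse--Bott orbit family of action $b_2$ is detected by the spectral invariant (rather than a constant orbit of action $0$, or the lower-action winding orbits on the inner wall of the potential well), and in addition that the monotonicity comparison and the stabilization identity hold in the precise version of the spectral capacity used in the paper. This is the same mechanism that computes the Hofer--Zehnder/spectral capacity of a ball and of a spherical shell, so it should go through for the profile $h$ above; everything else is bookkeeping with the two moment maps $\mu_k$ and $\mu$.
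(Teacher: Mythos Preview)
Your upper bound is fine and matches the paper. The gap is in the lower bound, and it is a genuine one rather than a technicality.

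Your strategy is to bound $c(\phi_H)$ from below by passing, via monotonicity and stabilization, to $c(\phi_h,\C^{k})$ for the radial profile $h$ supported in the shell $\set{b_{1}-\epsilon<\mu_{k}<b_{2}}\subset\C^{k}$, and then to assert that this equals $b_{2}$. But that last assertion is false in general: for $k=1$ the shell is an annulus of area $b_{2}-b_{1}+\epsilon$, so every compactly supported isotopy in it has spectral invariant at most $b_{2}-b_{1}+\epsilon$, which is strictly less than $b_{2}$ whenever $b_{1}>0$. Since your symmetry reduction to the regime $b_{1}+b_{2}\le a$ does not force $b_{1}=0$ (and may land you at $k=1$ after the swap), the argument breaks. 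More conceptually, the step $H\ge h(\mu_{k})$ discards all information coming from the $\mu$-direction; once you project to $\C^{k}$ you are bounded by the capacity of the shell there, and that is simply too small. So even if your $H$ happens to have spectral invariant close to $b_{2}$, your proof of that fact does not go through.

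The paper avoids this by taking a completely different route for the lower bound: instead of building a Hamiltonian near $K$, it studies the complement $K^{c}=\bd B(a)\setminus K$, moves its two pieces by explicit Hamiltonian isotopies into a cylinder to get $c(K^{c})\le\max\set{a-b_{2},b_{1}}$, and then uses the inequality $a=c(\bd B(a))=c(K\cup K^{c})\le c(K)+c(K^{c})$ (valid because $K$ and $K^{c}$ are cut out by Poisson-commuting functions). This squeezes all the inequalities to equalities and gives $c(K)=\min\set{b_{2},a-b_{1}}$ without ever needing to compute a spectral invariant directly.
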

In particular, even though the boundary is minimal, it contains proper compact subsets of any given smaller capacity. The proof is a simple application of Poisson commuting partitions of unity (i.e., using the toric structure) and embedding tricks, and is given in \S\ref{sec:proof-theorem-toric}.

\subsubsection{Strong Arnol'd chord conjecture}
\label{sec:strong-arnold-chord}

Boundary minimal sets are also related to the strong Arnol'd chord conjecture (see, e.g., \cite{ziltener-imrn-2016,kang-arxiv-2023,kang-zhang-arXiv-2024}). Indeed, adapting the argument of \cite{ziltener-imrn-2016}, we can prove:
\begin{theorem}[Ziltener]
  Suppose that that $\Omega\subset \R^{2n}$ is a boundary minimal exactly embedded Liouville domain, and $c(\Omega)$ is the minimal period of the Reeb flow on $\bd\Omega$ (e.g., if $\Omega$ is a uniformly convex Zoll domain). Let $\Lambda\subset \bd \Omega$ be a Legendrian. Then $\Lambda$ has a Reeb chord of length at most $c(\Omega)/2$.
\end{theorem}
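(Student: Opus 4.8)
The plan is to run an adaptation of Ziltener's argument through the closing Lemma~\ref{lemma:closing}, which already encapsulates the use of boundary minimality. Since the embedding of $\Omega$ is exact, the class $\Gamma=[\lambda_{\C^{n}}-\lambda_{\Omega}]\in H^{1}_{\mathrm{dR}}(\Omega)$ vanishes, so $\Gamma(\gamma)=0$ for every closed orbit and Lemma~\ref{lemma:closing} reduces to the statement: for every non-empty open $V\subset\bd\Omega$ and every non-negative non-constant $f$ supported in $V$ there is, for some $s\in(0,1]$, a closed (non-constant) Reeb orbit of $e^{-sf}\alpha$ meeting $V$ whose period is strictly less than $c(\Omega)$, where $\alpha$ is the intrinsic contact form on $\bd\Omega$.

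First I would fix a Weinstein neighbourhood of $\Lambda$ in $(\bd\Omega,\ker\alpha)$, identified with a neighbourhood of the zero section in $J^{1}(\Lambda)=T^{*}\Lambda\times\R_{z}$ so that $\alpha\mapsto dz-\lambda_{\mathrm{can}}$, and take $V=V_{k}$ a sequence of small boxes $\set{\abs{p}<\rho_{k},\ \abs{z}<\eta_{k}}$ shrinking onto $\Lambda$, with $f_{k}\ge 0$ a bump of amplitude $<\tfrac12$ supported in $V_{k}$. Let $\gamma_{k}$ be the closed Reeb orbit of $e^{-s_{k}f_{k}}\alpha$ given by Lemma~\ref{lemma:closing}, of $e^{-s_{k}f_{k}}\alpha$-period $<c(\Omega)$. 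The heart of the argument is a compactness statement. Outside $V_{k}$ the contact form is the unmodified $\alpha$; inside $V_{k}$ the Reeb vector field of $e^{-s_{k}f_{k}}\alpha$ still has strictly positive $\partial_{z}$-component (for $f_{k}$ of small amplitude), so no orbit can remain in $V_{k}$, and each $\gamma_{k}$ consists of a cyclic sequence of excursions along the genuine $\alpha$-Reeb flow between consecutive visits to $V_{k}$; moreover the time spent inside $V_{k}$ on each visit is $O(\eta_{k})\to 0$. Reparametrizing and passing to a subsequence, $\gamma_{k}$ converges to a \emph{broken $\alpha$-Reeb orbit}: a cyclic concatenation of arcs of the $\alpha$-Reeb flow whose break-points lie on $\Lambda$, with total $\alpha$-length $\le c(\Omega)$.

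Each arc of the broken limit is either a Reeb chord of $\Lambda$ or a closed $\alpha$-Reeb orbit through a point of $\Lambda$; since $c(\Omega)$ is, by hypothesis, the minimal Reeb period, a closed-orbit arc would by itself exhaust the length budget $c(\Omega)$ and leave no room for the remaining (positive-length) arcs — barring the degenerate case where the limit is a single closed orbit of period exactly $c(\Omega)$, which is excluded, in the non-Zoll case, by choosing the supports of the $f_{k}$ away from the (proper) locus swept by minimal orbits, and which otherwise produces a chord of length $\le c(\Omega)/2$ directly as soon as that orbit meets $\Lambda$ more than once. Hence the limit is a cyclic concatenation of at least two Reeb chords of $\Lambda$ whose lengths sum to at most $c(\Omega)$, so the shortest of them has length $\le c(\Omega)/2$. (When the Reeb flow is periodic, e.g.\ for a uniformly convex Zoll $\Omega$, there is nothing to do: a chord of length $\ell$ from $x$ to $y$ is completed by the remaining arc of the same orbit to a chord of length $c(\Omega)-\ell$ from $y$ to $x$, so the shortest chord is $\le c(\Omega)/2$ as soon as one chord exists.)

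The step I expect to be the main obstacle is the compactness in the second paragraph: controlling the perturbed orbits $\gamma_{k}$ as the supports of the $f_{k}$ shrink onto $\Lambda$ — ruling out collapse to a point, showing the $e^{-s_{k}f_{k}}\alpha$-period converges to the $\alpha$-length of the broken limit, and ensuring the limit is an honest cyclic concatenation of Reeb chords of $\Lambda$ rather than something more degenerate. Everything else — the reduction to the Weinstein normal form, the vanishing of $\Gamma$, and the use of the closing Lemma~\ref{lemma:closing} together with the identity $c(\Omega)=$ (minimal Reeb period) — enters as indicated and can be treated as a black box.
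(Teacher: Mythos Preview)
Your approach via perturbed Reeb dynamics and a compactness limit is genuinely different from the paper's, and there is a real gap.

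First, the hypotheses already force the Reeb flow on $\bd\Omega$ to be $c(\Omega)$-periodic: the exact-embedding case of Lemma~\ref{lemma:closing} gives a closed $\alpha$-orbit of period $\le c(\Omega)$ through every boundary point, and since $c(\Omega)$ is by assumption the \emph{minimal} period, every orbit has period exactly $c(\Omega)$. Hence your ``non-Zoll case'' is vacuous, and the whole argument takes place in the periodic regime.

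In that regime your compactness limit can degenerate to a single closed $\alpha$-orbit of period $c(\Omega)$ through one point of $\Lambda$ --- precisely when each Reeb orbit through $\Lambda$ meets $\Lambda$ only once, i.e., when the evolution map $\Lambda\times\R/c(\Omega)\Z\to\bd\Omega$ is injective. Your proposed escape (``as soon as that orbit meets $\Lambda$ more than once'') assumes exactly what must be proved, and the parenthetical about periodic flows (``as soon as one chord exists'') does not supply the missing existence statement. Nothing in your use of Lemma~\ref{lemma:closing} rules this degeneration out: the strict inequality $T_k<c(\Omega)$ for the perturbed periods can become equality in the limit.

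The paper's argument is much shorter and uses boundary minimality a second time, directly rather than through the closing lemma. From periodicity, if the evolution map $\Lambda\times\R/c(\Omega)\Z\to\bd\Omega$ were injective its image $L\subset\bd\Omega$ would be an embedded Lagrangian with $\hbar(L)=c(\Omega)$ (as in \cite{mohnke-annals-2001,ziltener-imrn-2016}), hence $c(L)=c(\Omega)$ by \cite{hermann-BSMF-2004,cant-zhang-arXiv-2024}; but $L$ is a proper compact subset of $\bd\Omega$, so boundary minimality forces $c(L)<c(\Omega)$ --- a contradiction. Thus two distinct points of $\Lambda$ lie on a common orbit, and the shorter of the two resulting chords has length $\le c(\Omega)/2$. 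No compactness analysis is needed.
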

\begin{proof}
  Consider the map $\Lambda\times \R/c(\Omega)\Z\to \bd \Omega$ which evolves $\Lambda$ by the Reeb flow. We have shown already in Lemma \ref{lemma:closing} that the Reeb flow on $\bd\Omega$ is $c(\Omega)$-periodic, so the map is well-defined. If this map were injective, then it would produce an embedded Lagrangian $L\subset \bd \Omega$ with $\hbar(L)=c(\Omega)$, as in \cite{mohnke-annals-2001,ziltener-imrn-2016}. Then by \cite{hermann-BSMF-2004,cant-zhang-arXiv-2024} this implies $c(L)=c(\Omega)$. However, $L\subset \bd \Omega$ is a proper compact subset of $\bd \Omega$, and so $c(L)<c(\Omega)$ by boundary minimality, thus the map could not have been an embedding. Thus there are two distinct points $x_{1},x_{2}\in \Lambda$ which lie on the same Reeb orbit, and this implies a Reeb chord of length $\le c(\Omega)/2$.
\end{proof}
Incidentally, using the projection $\pi:\bd B(1)\to \mathbb{CP}^{n-1}$, the fact that Lagrangians in $\bd B(1)$ (for $n>1$) bound $J$-holomorphic disks of area strictly less than $1$ implies any Lagrangian in $K\subset \mathrm{CP}^{n-1}$ bounds a disk of area strictly less than $1$.\footnote{Here we note that holomorphic disks of area less than $1$ with boundary in $\bd B(1)$ miss the origin (by Gromov's monotonicity theorem \cite{gromov-inventiones-1985}) and hence project to smooth disks in $\mathbb{C}P^{n-1}$ under the projection $\C^{n}\setminus \set{0}\to \mathbb{C}P^{n-1}$.} Since $K$ also bounds disks with symplectic area $1$, it follows that $K$ bounds a smooth disk with area less than $1/2$. This disk can be lifted to $\pi^{-1}(K)\subset \bd B(1)$. Thus we conclude:
\begin{corollary}
  Every Lagrangian in $\bd B(1)$, with $n>1$, bounds a smooth disk contained in $\bd B(1)$ with symplectic area at most $1/2$.
\end{corollary}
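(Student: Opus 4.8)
The plan is to combine the two immediately preceding facts: Corollary \ref{corollary:lagrangian-hbar} (every Lagrangian in the interior of $B(a)$ either bounds a $J$-holomorphic disk of area $\le a-b$ or is linked with a chosen $K\subset\bd B(a)$ of capacity $b$), and the projection trick sketched in the footnote. First I would reduce from a Lagrangian $L\subset\bd B(1)$ to a Lagrangian in $\mathbb{CP}^{n-1}$: given $L$, consider $K:=\pi^{-1}(\pi(L))\subset\bd B(1)$, which is the preimage of the Lagrangian-type image $\overline{L}:=\pi(L)\subset\mathbb{CP}^{n-1}$. Actually the cleaner route is to apply Corollary \ref{corollary:lagrangian-hbar} directly on $\mathbb{CP}^{n-1}$ via the known capacity of such preimage sets; but since Corollary \ref{corollary:lagrangian-hbar} is stated for $L$ in the \emph{interior} of $B(1)$, I would instead push $L$ slightly into the interior by the radial Liouville flow to get $L'$ arbitrarily close to $\bd B(1)$, apply the corollary with an appropriate $K\subset\bd B(1)$ of capacity close to $1/2$ (existence of a Lagrangian $K$ with $b=a/2$ is asserted in Corollary \ref{corollary:linking-small-ball}), and take a limit; in the limit $L'\to L$ the disk areas are bounded by $a-b\to 1/2$.

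The key steps in order: (1) choose $K\subset\bd B(1)$ a Lagrangian with $c(K)=1/2$, and arrange (using Lemma \ref{lemma:starshaped-unlinked} or an explicit geometric construction) that the relevant Lagrangian is Hamiltonian unlinked with $K$, so that alternative (2) of Corollary \ref{corollary:lagrangian-hbar} is excluded and alternative (1) holds; (2) conclude $L$ (or its interior push-off) bounds a $J$-holomorphic disk $u$ of symplectic area $<1/2$ for generic admissible $J$; (3) invoke Gromov's monotonicity \cite{gromov-inventiones-1985} to see that, having area $<1$, the disk $u$ cannot pass through the origin $0\in\C^{n}$, since a $J$-holomorphic curve through $0$ meeting the boundary sphere must have area $\ge 1$ (for suitable $J$); (4) therefore $u$ lies in $\C^{n}\setminus\{0\}$ and projects under $\pi:\C^{n}\setminus\{0\}\to\mathbb{CP}^{n-1}$ to a smooth (no longer holomorphic, but still smooth) disk $\bar u$ with boundary on $\pi(L)$, and symplectic area equal to that of $u$, hence $<1/2$ — wait, this already gives area $<1/2$ upstairs directly, so actually step (4) is only needed if we first passed through $\mathbb{CP}^{n-1}$; (5) finally, lift $\bar u$ back to $\pi^{-1}(\pi(L))\subset\bd B(1)$: the circle bundle $\pi^{-1}(\pi(L))\to\pi(L)$ is a principal $S^1$-bundle, and a smooth disk in the base lifts to a smooth disk in the total space iff its boundary class lifts, which holds after adjusting the lift by adding a suitable number of fiber classes; the symplectic area changes by an integer multiple of $1$ (the area of a fiber), so one can arrange the lifted disk to have area in $(-1/2,1/2)$ and, choosing orientation, in $[0,1/2]$, contained in $\bd B(1)$ as desired.

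Let me restate the logic more carefully, since the footnote's phrasing suggests the intended path really does go down to $\mathbb{CP}^{n-1}$ and back up. The cleanest write-up: take the $J$-holomorphic disk $u$ in $B(1)$ with $\operatorname{area}(u)<1/2$ guaranteed by Corollary \ref{corollary:lagrangian-hbar}; by monotonicity it misses $0$, so $\pi\circ u$ is a smooth disk in $\mathbb{CP}^{n-1}$ with boundary on $\pi(L)$ and the same area $<1/2$; since $\pi(L)$ also bounds the "obvious" disk of area $1$ (coming from the Hopf disk / a fiber-capping disk through $0$ — concretely the relative class whose boundary is a fiber generates $H_1$ of the fiber and caps off with area $1$), subtracting gives a smooth disk $D\subset\mathbb{CP}^{n-1}$ on $\pi(L)$ of area $1-\operatorname{area}(u)$, which exceeds $1/2$; taking whichever of the two ($\pi\circ u$ or $D$) has area $\le 1/2$ — here it is $\pi\circ u$ — and noting its area is $<1/2$, we finally lift it: $\pi^{-1}(\pi(L))\subset\bd B(1)$ is an $S^1$-bundle over $\pi(L)$, the restriction $u|_{\bd}$ already provides a lift of the boundary loop $\pi\circ u|_{\bd}$, so $\pi\circ u$ lifts to a smooth disk $\widetilde u$ in $\bd B(1)$ with boundary on $\pi^{-1}(\pi(L))\supset L$, of symplectic area equal to $\operatorname{area}(u)<1/2$. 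This $\widetilde u$ has boundary on $\pi^{-1}(\pi(L))$ rather than on $L$ itself, so the last step is to note $L=\pi^{-1}(\pi(L))\cap(\text{some section})$; one must ensure the boundary lies on $L$, which is done by choosing the lift so that $\partial\widetilde u$ coincides with the original boundary curve of $u$, which lies on $L$.

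The main obstacle I expect is reconciling alternative (1) versus alternative (2) of Corollary \ref{corollary:lagrangian-hbar} with the interior-versus-boundary issue: Corollary \ref{corollary:lagrangian-hbar} requires $L$ in the \emph{open} ball, whereas a Lagrangian in $\bd B(1)$ sits on the boundary sphere, so a limiting/perturbation argument (radial rescaling by $1-\epsilon$, then $\epsilon\to0$, with uniform area bounds from Gromov compactness) is needed and must be checked to preserve both the linking hypothesis and the monotonicity estimate. A secondary technical point is the lifting step: verifying that a smooth disk in $\mathbb{CP}^{n-1}$ with a prescribed boundary lift extends to a smooth disk in the $S^1$-bundle $\bd B(1)\setminus(\text{obstruction})$ preserving area — this is elementary obstruction theory for maps of the $2$-disk (which is contractible, so the bundle pulls back trivially, guaranteeing a lift; area is preserved because $\pi$ is a symplectic submersion away from $0$ up to the fiber contribution, and one tracks the fiber-winding to land in the correct area window). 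Everything else — choosing $K$ with $c(K)=a/2$ and arranging unlinkedness, and applying Gromov monotonicity — is either quoted from earlier in the paper or standard.
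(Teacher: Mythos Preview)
There is a genuine gap in your step (1). You propose to choose $K\subset\bd B(1)$ with $c(K)=1/2$ and then ``arrange (using Lemma~\ref{lemma:starshaped-unlinked} or an explicit geometric construction)'' that the push-off $L'$ of $L$ is Hamiltonian unlinked with $K$. But Lemma~\ref{lemma:starshaped-unlinked} requires \emph{both} sets to be symplectomorphic to starshaped sets, and a closed Lagrangian never is. More seriously, the existence of such an unlinking is exactly Question~1 in the paper, which is left open; and as the paper notes, a positive answer would prove Conjecture~\ref{conjecture:lagrangian-hbar}. Your ``cleaner write-up'' inherits the same gap: it begins by \emph{assuming} a $J$-holomorphic disk of area $<1/2$ from Corollary~\ref{corollary:lagrangian-hbar}, which already presupposes the unlinking.

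The paper's argument avoids this entirely. It does \emph{not} invoke Corollary~\ref{corollary:lagrangian-hbar} with $b=1/2$. Instead it uses boundary minimality (Corollary~\ref{cor:gamma-is-c-implies-bm}): since $L$ is a proper compact subset of $\bd B(1)$ when $n>1$, one has $c(L)<1$, and then \cite{hermann-BSMF-2004,cant-zhang-arXiv-2024} yield a $J$-holomorphic disk of some area $A\in(0,1)$, with no control on which side of $1/2$ it falls. This disk misses the origin by monotonicity and projects to a smooth disk on $\pi(L)\subset\mathbb{CP}^{n-1}$ of area $A$. Since $\pi(L)$ also bounds a disk of area $1$, the period group of areas contains both $A$ and $1$, hence contains $\min(A,1-A)\le 1/2$; the corresponding disk lifts to $\bd B(1)$. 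You actually write down this subtraction step, but because you have already assumed $A<1/2$ you dismiss it as redundant --- whereas in the paper it is precisely what does the work.
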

This result is certainly known to Ziltener (see \cite{ziltener-imrn-2016}), and presumably is known by other researchers as well.
This is related to Conjecture \ref{conjecture:lagrangian-hbar}, but does not imply it, as the disk is not a priori representable by a $J$-holomorphic disk for every admissible $J$; moreover the argument applies only to Lagrangians contained in the boundary of the ball.

The \emph{rationality constant} $\rho(L)$ of a Lagrangian $L$ is infimal positive area of a smooth disk with boundary on $L$. Thus we have shown that $\rho(L)\le a/2$ for all $L\subset \bd B(a)$ when $n>1$. It is natural to compare with \cite{dimitroglou-rizell-proc-GGT-2015,faisal-arXiv-2025} and ask the question:
\begin{question}
  Does every Lagrangian in $L\subset B(a)$ satisfy $\rho(L)\le a/2$? If yes, are the Lagrangians $L$ with $\rho(L)=a/2$ automatically contained in $\bd B(a)$?
\end{question}
The question is only posed in $\R^{2n}$ for $n>1$, and the answer is known to be yes in both cases, in $\R^{4}$, by the work of \cite{cieliebak-mohnke-inventiones-2018,dimitroglou-rizell-proc-GGT-2015,faisal-arXiv-2025}.

Another consequence of the packing versus linking dichotomy is:
\begin{theorem}\label{theorem:chord-1-2}
  Suppose that $\Lambda_{0},\Lambda_{1}\subset \bd B(1)$ are disjoint and unlinked Legendrians, and let $\alpha$ denote the standard contact form on $\bd B(1)$. Then, for any $f\ge 0$, there exists a chord from $\Lambda_{0}\cup \Lambda_{1}$ to $\Lambda_{0}\cup \Lambda_{1}$ of length at most $1/2$ with respect to the contact form $e^{-f}\alpha$.
\end{theorem}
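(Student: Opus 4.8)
The plan is to argue by contradiction and reduce to the packing versus linking dichotomy (Theorem~\ref{theorem:main}), which applies to pairs of compact subsets of $B(1)$ since $c(B(1))=\gamma(B(1))=1$.

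Suppose $\Lambda_0\cup\Lambda_1$ admits no Reeb chord of $\beta:=e^{-f}\alpha$ from itself to itself of length $\le 1/2$; I will produce a pair of disjoint, Hamiltonian unlinked compact sets in $B(1)$ violating the packing inequality, contradicting the hypothesis that $\Lambda_0,\Lambda_1$ are unlinked. First, the set of lengths of such chords is closed in $(0,\infty)$: for fixed $L$, chords of length $\le L$ are the solutions of $\phi^\beta_t(x)=y$ with $x,y\in\Lambda_0\cup\Lambda_1$ and $t$ in the compact interval $[c_0,L]$, where $c_0>0$ is a lower bound for chord lengths coming from a Weinstein neighbourhood of $\Lambda_0\cup\Lambda_1$. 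Hence there is $\varepsilon>0$ with no chord of length $<\tfrac12+\varepsilon$; set $T:=\tfrac12+\tfrac\varepsilon2$.

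Because $f\ge 0$, the pair $(\partial B(1),\beta)$ is the boundary of a star-shaped domain $\Omega_f\subseteq B(1)$ with intrinsic contact form $\beta$, and $\Omega_f$ has an inner collar $\cong(1-\delta,1]\times\partial\Omega_f$ carrying $\omega=d(r\beta)$. For $i=0,1$ I would tilt the Reeb trace of $\Lambda_i$ into this collar via $(x,t)\mapsto(\rho(t),\phi^\beta_t(x))$, $t\in[0,T]$, with $\rho$ staying close to $1$; since $\Lambda_i$ is Legendrian and $\phi^\beta_t$ preserves $\beta$ this is an embedded Lagrangian cylinder, and its ends $\Lambda_i$ and $\phi^\beta_T(\Lambda_i)$ are disjoint because $T<\tfrac12+\varepsilon$. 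Capping the ends (\`a la Mohnke) gives a closed embedded Lagrangian $L_i\subset\operatorname{int}B(1)$; since the Legendrian directions carry no Liouville action, every non-constant disk with boundary on $L_i$ has area at least $\int_0^T\rho\,dt$ minus the action of the capping region, and with a careful choice the construction gives $\hbar(L_0)+\hbar(L_1)>2T-\tfrac\varepsilon2>1$, hence $c(L_0)+c(L_1)=\hbar(L_0)+\hbar(L_1)>1$ by \cite{hermann-BSMF-2004,cant-zhang-arXiv-2024}. Finally, since there is no chord of length $<\tfrac12+\varepsilon$ between $\Lambda_0$ and $\Lambda_1$ in either direction, the two Reeb traces are disjoint, so $L_0\cap L_1=\emptyset$.

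By Theorem~\ref{theorem:main}, $L_0$ and $L_1$ are then Hamiltonian linked. The proof is finished once one deduces a contradiction from this together with $\Lambda_0,\Lambda_1$ being unlinked — equivalently, one must show that an unlinking of $\Lambda_0,\Lambda_1$ promotes to an unlinking of the capped traces $L_0,L_1$. I expect this to be the main obstacle, and the only place the unlinking hypothesis is used. The difficulty is that $L_i$ sits near the spread-out trace $\bigcup_{t\in[0,T]}\phi^\beta_t(\Lambda_i)$, not near $\Lambda_i$, so a hyperplane separating $\Lambda_0$ from $\Lambda_1$ need not separate the traces. My line of attack: first conjugate everything by a $\varphi\in G$ putting $\varphi(\Lambda_0),\varphi(\Lambda_1)$ on opposite sides of a hyperplane (this preserves chords and their lengths), then use that the trace has total Reeb length $T<1$ — so it does not wind all the way around $\partial B(1)$ — to keep the thickened, capped traces on their respective sides after one more Hamiltonian isotopy; alternatively, verify the maximum formula directly for the pair $(L_0,L_1)$, after which the proof of Theorem~\ref{theorem:main} applies verbatim. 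The remaining steps — the closedness argument yielding $\varepsilon$, the length bookkeeping in the trace construction, and $c(L_i)=\hbar(L_i)$ — are routine by comparison.
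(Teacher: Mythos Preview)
Your overall architecture matches the paper's: build two disjoint closed Lagrangians $L_0,L_1\subset B(1)$ via Mohnke's construction, with $c(L_i)=\hbar(L_i)$ close to the minimal chord length, and then apply the packing inequality from Theorem~\ref{theorem:main}. The contradiction argument, the bookkeeping with $T$ and $\varepsilon$, and the disjointness of the traces are all fine.

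The genuine gap is exactly the step you flag: promoting the Legendrian unlinking of $\Lambda_0,\Lambda_1$ to a Hamiltonian unlinking of $L_0,L_1$. Neither of your two proposed routes works as stated. The first --- conjugate by some $\varphi\in G$ separating $\varphi(\Lambda_0),\varphi(\Lambda_1)$ and then argue that the traces stay separated because ``$T<1$ so the trace does not wind all the way around $\partial B(1)$'' --- fails for two reasons: the unlinking of the $\Lambda_i$ is by a \emph{Legendrian} isotopy, and any $\varphi\in G$ realizing it will not preserve $\partial B(1)$ or the Reeb dynamics, so there is no control on where the traces go after conjugation; and even for the standard $\alpha$, a Reeb arc of length close to $1$ traverses nearly a full Hopf circle and will meet both sides of any hyperplane through the origin. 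The second route --- checking the max formula directly for $(L_0,L_1)$ --- would require showing these Lagrangian neighborhoods fall under one of the known hypotheses (exact or $\pi_1$-injective embeddings of Liouville domains), which they do not obviously do.

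The paper resolves this with a specific trick you are missing. In Mohnke's model $L_{i,R}=\Lambda_i\times\partial([0,R]\times[\delta,1])$, one considers the \emph{exact} Lagrangian isotopy
\[
L_{i,s}:=e^{s/2}L_{i,e^{-s}R},
\]
which shrinks the Reeb-direction extent of the rectangle while rescaling radially so that the bounded area (hence $\hbar$) is preserved. As $s\to\infty$ the projection of $L_{i,s}$ to the ideal contact boundary converges into an arbitrarily small neighborhood of $\Lambda_i$. At that stage the \emph{symplectization lift} of the contact isotopy realizing the Legendrian unlinking separates $L_{0,s}$ from $L_{1,s}$ by a hyperplane. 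This is the missing idea: rather than trying to control the full Reeb trace, one first deforms the Mohnke Lagrangians (exactly, hence without changing their capacities) so that they hug the original Legendrians, and only then applies the unlinking.
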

Here two Legendrians in $\bd B(1)$ are unlinked provided there is a Legendrian isotopy of the link $\Lambda_{0}\cup \Lambda_{1}$ sending the components to opposite sides of a hyperplane. We give the proof in \S\ref{sec:proof-theorem-chord-1-2}. This result is interesting because it is stable with respect to a class of $C^{0}$ small perturbations of the ball (perturbations which push the ball inwards). It is known that the conclusion of the strong Arnol'd conjecture is not stable under $C^{0}$ small pertubations; see the discussion in \cite{kang-arxiv-2023,kang-zhang-arXiv-2024}.

\subsubsection{Can zero capacity set link the ball?}
\label{sec:can-zero-capacity}

\begin{question}
  If $K$ links the ball $B(a)$, is it necessary that $c(K)>0$?
\end{question}
If the answer is positive, then one resolves the question of Ginzburg asking whether or not it holds that $c(N)>0$ for all hypersurfaces $N$, see \cite[\S3.3.4]{ginzburg-duke-2007} and \cite[\S1.4]{cant-zhang-arXiv-2024}. Unfortunately, one has:
\begin{theorem}
  For all sufficiently small $\epsilon>0$, and $k<n$, the set:
  \begin{equation*}
    K=\bd B(a+\epsilon)\cap \set{\pi\abs{z_{i}}^{2}=2\epsilon\text{ for all $i=1,\dots,n-1$}}
  \end{equation*}
  links the ball and has capacity $2\epsilon$. Thus there are sets with arbitrarily small capacity which link the ball.
\end{theorem}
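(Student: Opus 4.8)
\emph{Strategy.} The plan is to compute $c(K)=2\epsilon$ and then to read the linking off Theorem~\ref{theorem:main}, exploiting that $c(K)$ is small compared with $a$. Throughout one takes $n\ge 2$ (for $n=1$ the constraint set is empty and $K=\bd B(a+\epsilon)$) and $\epsilon<a/(2n-1)$, which is the precise meaning of ``sufficiently small''; the stray ``$k<n$'' does not occur in the definition of $K$ and plays no role.

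\emph{Capacity.} The set $K$ is the product torus $\{\pi\abs{z_i}^2=c_i\ (i=1,\dots,n)\}$ with $c_1=\dots=c_{n-1}=2\epsilon$ and $c_n=(a+\epsilon)-2(n-1)\epsilon=a-(2n-3)\epsilon$; the smallness of $\epsilon$ forces $c_n>0$ and $c_n\ge 2\epsilon$, so $\min_i c_i=2\epsilon$. For the upper bound, $K\subset\{\pi\abs{z_1}^2<2\epsilon+\delta\}$ for every $\delta>0$, and this cylinder has capacity $2\epsilon+\delta$ because $c$ is a normalized symplectic capacity; monotonicity of $c$ then gives $c(K)\le 2\epsilon$. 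For the lower bound I would invoke the estimate from \cite{hermann-BSMF-2004,cant-zhang-arXiv-2024} bounding the spectral capacity of a Lagrangian below by the minimal area of a $J$-holomorphic disk with boundary on it; taking $J$ standard, a short Blaschke-product argument (each coordinate of such a disk is, after rescaling, a finite Blaschke product, and at least one is non-constant) shows this minimal area equals $\min_i c_i=2\epsilon$. Hence $c(K)=2\epsilon$.

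\emph{Linking.} Choose $\epsilon'$ with $\epsilon<\epsilon'<2\epsilon$. Then $K$ and the closed ball $\overline{B(a)}$ are disjoint compact subsets of $B(a+\epsilon')$: indeed $K\subset\bd B(a+\epsilon)\subset B(a+\epsilon')$ and $\overline{B(a)}\subset B(a+\epsilon')$, while $K$ lies on $\{\pi\abs{z}^2=a+\epsilon\}$ and $\overline{B(a)}=\{\pi\abs{z}^2\le a\}$ are disjoint. Applying Theorem~\ref{theorem:main} with $a$ replaced by $a+\epsilon'$: either $K$ and $\overline{B(a)}$ are Hamiltonian linked, or $c(K)+c(\overline{B(a)})\le a+\epsilon'$. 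The latter reads $2\epsilon+a\le a+\epsilon'$ (using $c(K)=2\epsilon$ and $c(\overline{B(a)})=a$), i.e.\ $\epsilon'\ge 2\epsilon$, contrary to the choice of $\epsilon'$. So $K$ is Hamiltonian linked with the ball $B(a)$, and since $c(K)=2\epsilon$ can be taken arbitrarily small this proves the theorem.

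\emph{Main obstacle.} Everything except the lower bound $c(K)\ge 2\epsilon$ is soft: monotonicity and normalization of the spectral capacity, plus the packing-versus-linking dichotomy. The point needing care is the precise form of the Lagrangian capacity estimate in \cite{cant-zhang-arXiv-2024} --- only the inequality $c(L)\ge\hbar(L)$ (with $\hbar(L)$ the minimal disk area) is needed, the matching upper bound coming from the cylinder. One should also record that for $n\ge 2$ the sets $K$ and $\overline{B(a)}$ are unlinkable by a compactly supported \emph{smooth} isotopy --- translate $K$ far in the $x_1$-direction --- so the Hamiltonian linking asserted here is genuinely symplectic in nature.
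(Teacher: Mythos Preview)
Your proposal is correct and follows essentially the same route as the paper: identify $K$ as an elementary product torus with capacity $2\epsilon$, then invoke Theorem~\ref{theorem:main} to conclude linking from the failure of the packing inequality with $B(a)$ inside a slightly larger ball. Your version is simply more explicit---spelling out the smallness condition $\epsilon<a/(2n-1)$, the upper/lower bounds for $c(K)$, and the auxiliary $\epsilon'\in(\epsilon,2\epsilon)$ to fit both sets into an open ball---whereas the paper dispatches the capacity as ``well-known'' and applies the packing inequality directly in $B(a+\epsilon)$.
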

\begin{proof}
  The claim about the capacity of $K$ is well-known; indeed, the set is the elementary torus which has capacity $2\epsilon$ provided $\epsilon$ is small enough compared to $a$. The linking theorem follows from the failure of the packing inequality between $B(a)$ and $K$ in $B(a+\epsilon)$.
\end{proof}

\subsubsection{Widths of domains}
\label{sec:widths-domains}

Recently, \cite{buhovsky-tanny-JFPTA-2025} have established a general theorem which states that two domains $V_{1},V_{2}$ in $\R^{2n}$ satisfy a max \emph{inequality} provided that there exist sufficiently large disjoint thickenings $V_{i}(r)$ defined for $r\in [0,1]$. This means $V_{i}(r)$ is obtained by flowing $V_{i}$ by a vector field which remains outwardly transverse to $\bd V_{i}(r)$ along the flow. Their theorem is expressed in terms the \emph{width} of the thickening (see \cite[Definition 1.2]{buhovsky-tanny-JFPTA-2025}).

Unfortunately, their theorem is stated only for non-negative Hamiltonians (our argument relies on both non-negative and non-positive Hamiltonians). More seriously, the max \emph{inequality} is not sufficient for our argument to proceed (we require the max equality). This begs the question:
\begin{question}
  If $V_{1},V_{2}$ are disjoint subsets of $\C^{n}$ whose Buhovsky-Tanny widths are large enough, does it follow that $c(V_{1})+c(V_{2})\le \gamma(V_{1}\cup V_{2})$?
\end{question}
The thickenings considered in the above question should be disjoint.

A related concept to ``width'' is the maximal size of a disk cotangent bundle one can embed around a given Lagrangian $L$; see \cite{viterbo-CRASP-1990}.

\subsection{Acknowledgements}
\label{sec:acknowledgements}

The author would like to acknowledge H.\@~Alizadeh, M.\@ S.\@~Atallah, B.\@~Bramham, O.\@~Cornea, G.\@~Dimitroglou-Rizell, Y.\@~Ganor, V.\@~Humilière, S.\@~Matijevi\'c, L.\@~Nakamura, S.\@~Nemirovski, R.\@~Leclercq, S.\@~Seyfaddini, E.\@~Shelukhin, C.\@~Viterbo, and J.\@~Zhang for useful discussions. The author is supported in his research by funding from the ANR project CoSy.

\section{Proofs}
\label{sec:proofs}

We assume the reader is familiar with the set-up of spectral invariants and the spectral capacity, using the conventions of \cite{alizadeh-atallah-cant-arXiv-2024,cant-zhang-arXiv-2024}. To each compactly supported Hamiltonian isotopy $\varphi_{t}$, which is generated by a compactly supported family of smooth functions $H_{t}$, one associates the spectral invariant $c(\varphi_{t})$ of the unit element in Floer homology for $H_{t}$. For open sets $U$, one defines $c(U)=\sup \set{c(\varphi_{t}):\varphi_{t}\text{ is supported in }U}$, and for compact sets $K$, one defines $c(K)=\inf \set{c(U):K\subset U}$.

\subsection{Stability under Hamiltonian diffeomorphisms}
\label{sec:stab-under-hamilt}

One well-known fact that we will appeal to in our proof is:
\begin{lemma}
  Suppose that $g\in G$ and $\varphi_{t}$ is a compactly supported isotopy. Then there is an equality of spectral invariants:
  \begin{equation*}
    c(g\varphi_{t}g^{-1})=c(\varphi_{t}).
  \end{equation*}
\end{lemma}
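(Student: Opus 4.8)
The plan is to use the naturality of Floer homology under conjugation by an element $g\in G$. First I would recall that if $\varphi_{t}$ is generated by the compactly supported family $H_{t}$, then the conjugated isotopy $g\varphi_{t}g^{-1}$ is generated by the family $H_{t}\circ g^{-1}$ (up to the usual conventions; if $g$ is itself the time-$1$ map of an isotopy $g_{s}$ one can also write this as $H_{t}(g_{s}^{-1}(\cdot))$, but only the endpoint matters here). This is the standard computation: differentiating $g\varphi_{t}g^{-1}$ in $t$ gives the vector field $g_{*}X_{H_{t}}$, and since $g$ is a symplectomorphism, $g_{*}X_{H_{t}} = X_{H_{t}\circ g^{-1}}$. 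Since $g$ is compactly supported, $H_{t}\circ g^{-1}$ is again compactly supported, so the conjugated isotopy is an admissible Hamiltonian isotopy.

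Next I would invoke the chain-level naturality isomorphism: pullback by $g$ (or more precisely, the induced map on loop spaces / chains) gives a canonical isomorphism $CF_{*}(H_{t}) \to CF_{*}(H_{t}\circ g^{-1})$ which intertwines the continuation maps and the PSS maps, and in particular carries the unit to the unit. Concretely, the correspondence $x(t)\mapsto g(x(t))$ is a bijection between the $1$-periodic orbits of $X_{H_{t}}$ and those of $X_{H_{t}\circ g^{-1}}$, it preserves the action functional (because $g$ is compactly supported and hence exact with vanishing primitive at infinity, so it pulls back the Liouville form to itself up to an exact term, and the symplectic action is unchanged), and it sets up a bijection between Floer trajectories for a $J$ and Floer trajectories for $g_{*}J$. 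Therefore the whole Floer package is isomorphic, the isomorphism respects the action filtration, and the spectral number of the unit is unchanged: $c(g\varphi_{t}g^{-1}) = c(\varphi_{t})$.

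The main obstacle — really the only subtle point — is to verify that the action functional is exactly preserved and not merely shifted by a constant. This is where compact support of $g$ is essential: a general symplectomorphism would pull back the primitive $\lambda = \sum (x_i\,dy_i - y_i\,dx_i)/2$ (or whichever convention is in force) to $\lambda + dF$ for some function $F$, and one would pick up the value of $F$; but when $g$ is compactly supported and equals the identity near infinity, $F$ can be taken compactly supported, and on the periodic orbits in question the shift $\oint dF = 0$. One must also check that the normalization of $H_{t}$ (if the convention requires mean-zero or compactly supported Hamiltonians) is preserved under $H_t\mapsto H_t\circ g^{-1}$, which is immediate from compact support. With these points in place the equality is formal, which is why the lemma is stated as "well-known"; I would present the argument briefly and cite the conventions of \cite{alizadeh-atallah-cant-arXiv-2024,cant-zhang-arXiv-2024} for the detailed setup of the spectral invariant rather than reprove the naturality of Floer homology from scratch.
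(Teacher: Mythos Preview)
Your argument is correct: building a direct naturality isomorphism of the filtered Floer chain complexes under $x(t)\mapsto g(x(t))$ is a standard way to prove this lemma, and your discussion of why the action is preserved (exactness of $g^{*}\lambda-\lambda$ with compactly supported primitive) is the right point to emphasize.

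The paper, however, takes a genuinely different route. Instead of constructing a chain-level isomorphism and checking compatibility with PSS/continuation maps, it connects $\id$ to $g$ by a Hamiltonian isotopy $g_{s}$, observes that the action spectrum of $g_{s}\varphi_{t}g_{s}^{-1}$ is independent of $s$ (this uses only the elementary action-preservation calculation you wrote, not the full Floer naturality), and then invokes continuity of $s\mapsto c(g_{s}\varphi_{t}g_{s}^{-1})$ together with the spectrality axiom: a continuous function into a fixed nowhere-dense set is constant. Your approach yields more information---an explicit filtered isomorphism of the whole Floer package, and it would apply to any compactly supported symplectomorphism $g$ without needing it to be Hamiltonian---but requires verifying that the isomorphism carries unit to unit. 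The paper's approach is quicker and uses only the black-box axioms (continuity, spectrality) of $c$, at the cost of relying on the fact that $g$ can be joined to the identity within $G$.
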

\begin{proof}
This can be proved using, e.g., a continuity argument and the fact that the action spectrum of $g_{s}\varphi_{t}g_{s}^{-1}$ is independent of $s$, where $g_{s}$, $s\in [0,1],$ is a Hamiltonian isotopy generating $g$. See, e.g., \cite[Corollary 4.3]{viterbo-mathann-1992}
\end{proof}

\subsection{Max formula}
\label{sec:max-formula}

The version of the maximum formula we require is:
\begin{lemma}
  If $\varphi_{0,t},\varphi_{1,t}$ are compactly supported Hamiltonian isotopies, and their supports lie on opposite sides of a hyperplane, then:
  \begin{equation*}
    c(\varphi_{0,t}\varphi_{1,t})=\max\set{c(\varphi_{0,t}),c(\varphi_{1,t})}.
  \end{equation*}
\end{lemma}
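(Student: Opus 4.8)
The plan is to establish the maximum formula for isotopies supported on opposite sides of a hyperplane by reducing to the two classical inequalities $c(\varphi_{0,t}\varphi_{1,t}) \le c(\varphi_{0,t}) + c(\varphi_{1,t})$ (subadditivity/triangle inequality for spectral invariants) and $c(\varphi_{0,t}\varphi_{1,t}) \ge \max\{c(\varphi_{0,t}), c(\varphi_{1,t})\}$, and then upgrading the first to the sharp $\max$ by exploiting the hyperplane separation. Since subadditivity only gives a sum, the real content is the following: when the supports lie on opposite sides of a hyperplane $\Pi$, the two isotopies commute ($\varphi_{0,t}\varphi_{1,t} = \varphi_{1,t}\varphi_{0,t}$), and moreover the composed isotopy can be generated by the Hamiltonian $H_{0,t} + H_{1,t}$ where $H_{i,t}$ has support on side $i$ of $\Pi$. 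The key point is that, because the supports are separated, one can conjugate by a translation (or more generally by a symplectomorphism generated by a suitable Hamiltonian) that drags one of the two sides off to infinity without ever letting the supports collide; this is the standard trick showing that $H_{1,t}$ can be ``pushed to infinity.'' I would first recall this commutation and the additive presentation of the generator.

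The next step is to produce a Hamiltonian isotopy $g_s \in G$, $s \in [0,1]$, with $g_0 = \id$, such that $g_s$ translates the half-space containing $\mathrm{supp}(\varphi_{1,t})$ arbitrarily far in the direction normal to $\Pi$, keeping it disjoint from $\mathrm{supp}(\varphi_{0,t})$ for all $s$. By the conjugation-invariance lemma stated just above in the excerpt, $c(g_s\varphi_{1,t}g_s^{-1}) = c(\varphi_{1,t})$ for every $s$, and likewise $g_s \varphi_{0,t} g_s^{-1}$ will equal $\varphi_{0,t}$ once we choose $g_s$ to be supported away from side $0$. Thus $c(\varphi_{0,t}\cdot g_s\varphi_{1,t}g_s^{-1}) = c(g_s(\varphi_{0,t}\varphi_{1,t})g_s^{-1}) = c(\varphi_{0,t}\varphi_{1,t})$ for all $s$. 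Now I would invoke the subadditivity inequality applied to the displaced pair: $c(\varphi_{0,t}\cdot g_s\varphi_{1,t}g_s^{-1}) \le c(\varphi_{0,t}) + c(g_s\varphi_{1,t}g_s^{-1}) = c(\varphi_{0,t}) + c(\varphi_{1,t})$, which by itself is still only the sum. The sharpening comes from combining this with the ``energy--capacity''-type fact that when two supports can be displaced from each other by a Hamiltonian of arbitrarily small energy in the limit — more precisely, the fact that the spectral invariant of $\varphi_{1,t}$ measured relative to a region disjoint from $\mathrm{supp}(\varphi_{0,t})$ contributes as a genuine $\max$ rather than a sum — is exactly the statement from \cite{humiliere-le-roux-seyfaddini-2016} that the generating-function/spectral capacity obeys the max formula for subsets separated by a hyperplane, which is quoted in the excerpt. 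So the cleanest route is: cite that the max formula holds for the pair of open half-space neighborhoods of the two supports (this is the hyperplane-separated case of \cite{humiliere-le-roux-seyfaddini-2016,tanny-JSG-2022}), and observe that $\varphi_{0,t}$ and $\varphi_{1,t}$ are each supported in such a neighborhood, so the formula for the composition follows by monotonicity of $c$ in the support together with the lower bound $c(\varphi_{0,t}\varphi_{1,t}) \ge \max\{c(\varphi_{0,t}),c(\varphi_{1,t})\}$ (which holds because restricting the composition to either half-space recovers the corresponding factor, the other factor acting trivially there).

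To assemble it concretely: let $\Pi = \{x_1 = 0\}$ after an affine symplectic change of coordinates, so $\mathrm{supp}(\varphi_{0,t}) \subset \{x_1 < 0\}$ and $\mathrm{supp}(\varphi_{1,t}) \subset \{x_1 > 0\}$. First, $\varphi_{0,t}\varphi_{1,t} = \varphi_{1,t}\varphi_{0,t}$ and this isotopy is generated by $H_{0,t}\#H_{1,t}$, which one checks equals $H_{0,t} + H_{1,t}$ up to reparametrization since the two terms have disjoint support at every time; hence the action spectrum of $\varphi_{0,t}\varphi_{1,t}$ is the union of the action spectra of the factors. Second, since $c(\varphi_{i,t})$ is an element of the action spectrum of $\varphi_{i,t}$, both $c(\varphi_{0,t})$ and $c(\varphi_{1,t})$ lie in the spectrum of the composition, which gives $c(\varphi_{0,t}\varphi_{1,t}) \ge \max\{c(\varphi_{0,t}),c(\varphi_{1,t})\}$ after checking the correct branch is selected (this is where one uses that the spectral invariant varies continuously and cannot jump between branches under the homotopy that turns off $H_{1,t}$). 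Third, for the reverse inequality, apply the subadditivity bound after conjugating $\varphi_{1,t}$ by a sequence $g_s$ displacing $\{x_1 > 0\}$ to the right: the displacement energy of the relevant region tends to zero in the appropriate sense, forcing the sum bound to collapse to the max — equivalently, just quote the hyperplane-separated max formula of \cite{humiliere-le-roux-seyfaddini-2016}. The main obstacle is the bookkeeping in the lower bound: showing that the spectral selector picks out $\max\{c(\varphi_{0,t}),c(\varphi_{1,t})\}$ rather than, say, a sum or some other element of the union of spectra. This is handled by a connectedness/continuity argument along the path $r \mapsto$ (isotopy generated by $H_{0,t} + rH_{1,t}$), $r \in [0,1]$: the spectral invariant is continuous in $r$, at $r=0$ it equals $c(\varphi_{0,t})$, the spectrum at time $r$ is contained in $\mathrm{Spec}(\varphi_{0,t}) + r\cdot\mathrm{Spec}(\varphi_{1,t})$, and combining with the symmetric path in the other variable pins down the value to be the maximum.
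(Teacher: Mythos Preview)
The paper's ``proof'' of this lemma is a one-line citation: the result is quoted as the maximum formula for spectral invariants from \cite{humiliere-le-roux-seyfaddini-2016,ganor-tanny-AGT-2023}, with no argument given. Your proposal ultimately lands in the same place --- you twice say ``just quote the hyperplane-separated max formula of \cite{humiliere-le-roux-seyfaddini-2016}'' --- so at the level of what is actually \emph{proved}, your write-up matches the paper.

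However, the independent argument you wrap around that citation contains genuine errors and should be dropped rather than polished. First, the displacement trick does nothing: conjugating $\varphi_{1,t}$ by a translation $g_s$ leaves $c(g_s\varphi_{1,t}g_s^{-1})=c(\varphi_{1,t})$ exactly, so the subadditivity bound $c(\varphi_{0,t}\varphi_{1,t})\le c(\varphi_{0,t})+c(\varphi_{1,t})$ never improves, and the claim that ``the displacement energy of the relevant region tends to zero'' is false (a half-space has infinite displacement energy). Second, when the supports are disjoint the action spectrum of $H_{0,t}+rH_{1,t}$ is the \emph{union} $\mathrm{Spec}(H_{0,t})\cup r\cdot\mathrm{Spec}(H_{1,t})$, not the Minkowski sum you wrote; with the correct union, the continuity-in-$r$ argument no longer pins down the value as the maximum without further input. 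Third, the sentence ``restricting the composition to either half-space recovers the corresponding factor'' does not translate into an inequality for spectral invariants, which are global and not computed by restriction. None of these gaps is fatal to the lemma --- the cited references really do prove it --- but your sketch does not constitute an alternative proof.
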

\begin{proof}
  As explained in the introduction, this result is not new (and is proved in \cite{humiliere-le-roux-seyfaddini-2016,ganor-tanny-AGT-2023}), and is known as the \emph{maximum formula for spectral invariants}.
\end{proof}

\subsection{The spectral diameter of the ball}
\label{sec:spectr-diam-ball}

\begin{lemma}
  Suppose that $\varphi_{t}$ is a Hamiltonian isotopy supported in $B(a)$. Then:
  \begin{equation*}
    c(\varphi_{t})+c(\varphi_{t}^{-1})\le a.
  \end{equation*}
\end{lemma}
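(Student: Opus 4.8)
The plan is to realize $\gamma(B(a)) = c(B(a)) = a$, or rather the inequality $c(\varphi_t) + c(\varphi_t^{-1}) \le a$ that underlies it, by exhibiting a large family of ``competitor'' isotopies supported in a bigger ball and applying the maximum formula. Concretely, I would first observe that it suffices to prove the claim with $B(a)$ replaced by any open neighbourhood; since $c$ of a compact set is an infimum over open neighbourhoods, and $\varphi_t$ compactly supported in the open ball $B(a)$ means $\mathrm{supp}(\varphi_t) \subset B(a')$ for some $a' < a$, a limiting argument will let me work with the closed ball. The core construction: inside a slightly larger ball $B(a+\epsilon)$, find a compactly supported Hamiltonian isotopy $\psi_t$ whose support is disjoint from a translate $v + \mathrm{supp}(\varphi_t)$ of the support of $\varphi_t$, such that $\psi_1$ pushes $B(a')$ (the support region) off to one side of a hyperplane — using that a ball of capacity slightly less than $a$ can be displaced. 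Then conjugating $\varphi_t$ by such a $\psi$-type map places the support of $\varphi_t$ and of $\varphi_t^{-1}$-related pieces in position to apply the max formula.

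More precisely, here is the mechanism I expect to use. Take $g \in G$ supported in $B(a+\epsilon)$ moving a hyperplane-half containing $\mathrm{supp}(\varphi_t)$ so that $g(\mathrm{supp}\,\varphi_t)$ and $\mathrm{supp}\,\varphi_t$ lie on opposite sides of a hyperplane (possible because $B(a')$, $a'<a$, embeds with room to spare in $B(a+\epsilon)$, so two disjoint copies fit). Set $\varphi_t' = g\varphi_t g^{-1}$, which has $c(\varphi_t') = c(\varphi_t)$ by the stability lemma and is supported opposite to $\varphi_t$. Now $\varphi_t' \varphi_t^{-1}$ has spectral invariant $\max\{c(\varphi_t'), c(\varphi_t^{-1})\} = \max\{c(\varphi_t), c(\varphi_t^{-1})\}$ by the max formula, and it is supported in $B(a+\epsilon)$. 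But $\varphi_t'\varphi_t^{-1}$ is, up to a conjugation, a loop-like object; in fact one wants to compare $c(\varphi_t'\varphi_t^{-1})$ against $c(B(a+\epsilon)) = a+\epsilon$ and, crucially, also exploit that $\varphi_1' \varphi_1^{-1}$ can be arranged to be trivial-ish or that $c$ of it plus $c$ of its inverse stays controlled. The cleanest route is probably: since $\varphi_t'$ is conjugate to $\varphi_t$ and supported on the far side, the concatenation $\varphi_t \cdot (\text{far-side copy of } \varphi_t^{-1})$ has spectral invariant $\max\{c(\varphi_t), c(\varphi_t^{-1})\}$, while this concatenation is Hamiltonian isotopic (rel endpoints, hence with the same spectral invariant) to something supported in $B(a)$ whose spectral norm is bounded by $\gamma$-type considerations — and here is where one genuinely needs the input $\gamma(B(a+\epsilon)) \le a+\epsilon$ cited from \cite{alizadeh-atallah-cant-arXiv-2024}. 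Letting $\epsilon \to 0$ yields $\max\{c(\varphi_t), c(\varphi_t^{-1})\} \le a$; combined with the elementary triangle-type inequality $c(\varphi_t) + c(\varphi_t^{-1}) \le \gamma(\varphi_t) + (\text{something})$...

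Actually, I suspect the intended proof is shorter and uses $\gamma(B(a)) = a$ essentially as a black box: by definition $\gamma(\varphi_t) = c(\varphi_t) + c(\varphi_t^{-1})$ is the spectral norm, and $\gamma(B(a)) = \sup\{\gamma(\varphi_t) : \mathrm{supp}\,\varphi_t \subset B(a)\}$; so the statement $c(\varphi_t) + c(\varphi_t^{-1}) \le a$ for all such $\varphi_t$ is literally the inequality $\gamma(B(a)) \le a$, which is the easy (upper bound) half of the theorem of \cite{alizadeh-atallah-cant-arXiv-2024}. In that case the proof is a one-line citation plus unwinding definitions. I would therefore structure the writeup to first recall $\gamma(\varphi_t) := c(\varphi_t) + c(\varphi_t^{-1})$ and $\gamma(B(a)) := \sup_{\mathrm{supp} \subset B(a)} \gamma(\varphi_t)$, then state that $\gamma(B(a)) = a$ is proved in \cite{alizadeh-atallah-cant-arXiv-2024}, and conclude.

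\begin{proof}
  By definition, the spectral norm of $\varphi_{t}$ is $\gamma(\varphi_{t}) = c(\varphi_{t}) + c(\varphi_{t}^{-1})$, and the spectral diameter of the open set $B(a)$ is
  \begin{equation*}
    \gamma(B(a)) = \sup\set{c(\varphi_{t}) + c(\varphi_{t}^{-1}) : \varphi_{t}\text{ is supported in }B(a)}.
  \end{equation*}
  It was proved in \cite{alizadeh-atallah-cant-arXiv-2024} that $\gamma(B(a)) = c(B(a)) = a$; in particular $\gamma(B(a)) \le a$, which is exactly the asserted inequality. (Alternatively, one can give a self-contained argument: given $\varphi_{t}$ supported in $B(a')$ with $a' < a$, fix $\epsilon>0$ with $a'+\epsilon < a$ and choose $g \in G$, supported in $B(a)$, so that $g(B(a'))$ and $B(a')$ lie on opposite sides of a hyperplane. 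The conjugate $\varphi_{t}' := g\varphi_{t}g^{-1}$ satisfies $c(\varphi_{t}') = c(\varphi_{t})$ and is supported opposite to $\varphi_{t}$, so by the maximum formula
  \begin{equation*}
    c\bigl(\varphi_{t}'\varphi_{t}^{-1}\bigr) = \max\set{c(\varphi_{t}),\, c(\varphi_{t}^{-1})}.
  \end{equation*}
  Since $\varphi_{t}'\varphi_{t}^{-1}$ is supported in $B(a)$ and is Hamiltonian isotopic rel endpoints to a loop whose spectral invariants are controlled by $\gamma(B(a))$, one obtains $\max\set{c(\varphi_{t}), c(\varphi_{t}^{-1})} \le a$, and then applying the same reasoning after first displacing one factor yields $c(\varphi_{t}) + c(\varphi_{t}^{-1}) \le a$.)
\end{proof}

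The main obstacle, if one insists on the self-contained route rather than quoting \cite{alizadeh-atallah-cant-arXiv-2024}, is the last step: converting the max-formula bound on $\max\{c(\varphi_t), c(\varphi_t^{-1})\}$ into the additive bound $c(\varphi_t) + c(\varphi_t^{-1}) \le a$. This is precisely the content of the spectral-diameter computation and is not formal — it requires the genuine geometric input that a full ball's worth of Hamiltonian isotopy cannot have large spectral norm, which is why the cleanest writeup simply cites the theorem.
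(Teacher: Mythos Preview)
Your main proof---unwinding the definition of $\gamma$ and citing \cite{alizadeh-atallah-cant-arXiv-2024} for $\gamma(B(a))=a$---is exactly what the paper does (its entire proof is the sentence ``This is proved in \cite{alizadeh-atallah-cant-arXiv-2024}''). So on that count you are correct and aligned with the paper.

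The parenthetical ``Alternatively'' argument, however, does not work as written. You claim one can choose $g\in G$ \emph{supported in $B(a)$} so that $g(B(a'))$ and $B(a')$ lie on opposite sides of a hyperplane; but for $a'$ close to $a$ this is impossible, since two disjoint balls of capacity $a'$ cannot both fit inside $B(a)$. (Earlier in your exploratory text you correctly place $g$ in a \emph{larger} ball $B(a+\epsilon)$, but then the composite $\varphi_t'\varphi_t^{-1}$ is only supported in $B(a+\epsilon)$, and bounding its spectral invariants by $a+\epsilon$ and letting $\epsilon\to 0$ gives only $\max\{c(\varphi_t),c(\varphi_t^{-1})\}\le a$, not the sum.) You yourself identify the remaining step---upgrading the max bound to the additive bound---as the genuine content of the spectral-diameter computation, and that is right: this step is not formal and is precisely what \cite{alizadeh-atallah-cant-arXiv-2024} supplies. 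So the alternative route, as sketched, is circular or incomplete, and you should simply drop it and keep the citation.
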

\begin{proof}
  This is proved in \cite{alizadeh-atallah-cant-arXiv-2024}.
\end{proof}

\subsection{Proof of Theorem \ref{theorem:main}}
\label{sec:proof-theor-refth}

We can now prove the main theorem. Suppose that $K_{1}, K_{2}\subset \Omega$ are disjoint compact sets, and $c(\Omega)=\gamma(\Omega)=a$. If they are Hamiltonian linked, then we are done. Therefore, assume they are Hamiltonian unlinked. Then there are disjoint open neighborhoods $U_{1},U_{2}$ of $K_{1},K_{2}$ which are also unlinked. Then:
\begin{equation*}
  c(K_{1})+c(K_{2})\le c(U_{1})+c(U_{2})=\sup c(\varphi_{1,t})+\sup c(\varphi_{2,t}),
\end{equation*}
where the supremums are over $\varphi_{i,t}$ supported in $U_{i}$. Now let $g\in G$ realize the unlinking of $U_{1},U_{2}$. Then \S\ref{sec:stab-under-hamilt} implies:
\begin{equation*}
  c(\varphi_{i,t})=c(g\varphi_{i,t}g^{-1})\text{ for }i=1,2.
\end{equation*}
On the other hand, since $g\varphi_{1,t}g^{-1}$ and $g\varphi_{2,t}^{-1}g^{-1}$ are supported on opposite sides of a hyperplane, \S\ref{sec:stab-under-hamilt} and \S\ref{sec:max-formula} imply:
\begin{equation*}  c(\varphi_{1,t}\varphi_{2,t}^{-1})=c(g\varphi_{1,t}\varphi_{2,t}^{-1}g^{-1})=\max\set{c(g\varphi_{1,t}g^{-1}),c(g\varphi_{2,t}^{-1}g^{-1})}.
\end{equation*}
Similarly:
\begin{equation*}  c(\varphi_{2,t}\varphi_{1,t}^{-1})=c(g\varphi_{2,t}\varphi_{1,t}^{-1}g^{-1})=\max\set{c(g\varphi_{2,t}g^{-1}),c(g\varphi_{1,t}^{-1}g^{-1})}.
\end{equation*}
Thus:
\begin{equation*}
  \begin{aligned}    c(\varphi_{1,t})+c(\varphi_{2,t})&=c(g\varphi_{1,t}g^{-1})+c(g\varphi_{2,t}g^{-1})\\ &\hspace{1cm}\le c(\varphi_{1,t}\varphi_{2,t}^{-1})+c(\varphi_{2,t}\varphi_{1,t}^{-1})\le a,
  \end{aligned}
\end{equation*}
where we have used $\gamma(\Omega)=a$ in the last line. Taking the supremum over $\varphi_{1,t},\varphi_{2,t}$ completes the proof.\hfill$\square$

\subsection{Proof of Lemma \ref{lemma:closing}}
\label{sec:proof-lemma-closing}

The closing lemma we stated concerns symplectically embedding Liouville domains $\Omega\subset \C^{n}$ which are assumed to be boundary minimal (e.g., $\Omega=B(a)$).

Let $\Omega(sf)\subset \Omega$ be the subdomain obtained by flowing backwards by $\Omega$'s intrinsic Liouville flow for time $sf$. Then $\lambda_{\Omega}|_{\bd\Omega(sf)}=e^{-sf}\alpha$.

It is well-known that the spectral capacity of $\Omega(sf)\subset \C^{n}$ is the period of some Reeb orbit $\gamma_{s}$ for $e^{-sf}\alpha$, plus $\Gamma(\gamma_{s})$ where $\Gamma=[\lambda_{\C^{n}}-\lambda_{\Omega}]$ is a cohomology class, i.e.,
\begin{equation*}
  \text{period of }\gamma_{s}+\Gamma(\gamma_{s})=c(\Omega(sf)).
\end{equation*}
The set of periods of orbits which \emph{do not} pass through $V$ is independent of $s$, and the set of periods is closed and nowhere dense. In addition, the values of $\Gamma$ are countable (when evaluated on elements of $H_{1}(\Omega,\Z)$). Thus, if $\gamma_{s}$ never passed through $V$, as $s$ ranges over $(0,1]$, then we would conclude that $c(\Omega(sf))$ was valued in a countable union of closed and nowhere dense sets. Since $s\mapsto c(\Omega(sf))$ is continuous (as is easy to prove by monotonicity and the definition of $c$ via outer regularity) it follows that $c(\Omega(sf))$ is independent of $s$, contradicting boundary minimality of $\Omega$.\hfill$\square$

\subsection{Proof of Corollary \ref{corollary:lagrangian-hbar}}
\label{sec:proof-corollary-lag-hbar}

For this we only need to appeal to \cite[Theorem 1.6]{hermann-BSMF-2004} or \cite[Theorem 2]{cant-zhang-arXiv-2024}, which relates the areas of holomorphic disks on Lagrangians to their spectral capacity. To show the statement is not vacuous, let us observe that one can take $K=\mathrm{pr}^{-1}(\mathbb{RP}^{n-1})$ where $\mathrm{pr}:\bd B(a)\to \mathbb{CP}^{n-1}$ is the Hopf fibration; see \cite{weinstein-book-1977,audin-CMH-1988,polterovich-math-Z-1991} \hfill$\square$

\subsection{Proof of Lemma \ref{lemma:starshaped-unlinked}}
\label{sec:proof-lemma-starshaped-unlinked}

This is a straightforward isotopy extension type result (see, e.g., \cite[\S3]{mcduff-salamon-book-2017}). Let us consider an embedding:
\begin{equation*}
  E=E_{1}\sqcup E_{2}:K_{1}\sqcup K_{2}\to \R^{2n}
\end{equation*}
where $K_{i}\subset \R^{2n}$ are compact starshaped sets (about the origin), and such that $E_{i}$ extends to a symplectic map on a neighborhood of $K_{i}\subset \R^{2n}$. Define:
\begin{equation*}
  E_{r}= e^{r}E(e^{-r}z_{1},e^{-r}z_{2}).
\end{equation*}
Then $E_{r}$ is a symplectic isotopy, and $\omega(\d E_{r}(-),\bd_{r}E_{r})$ are closed one-forms on a neighborhood of $K_{i}$. Thus there is a smooth family of functions $H_{r}$ such that:
\begin{equation*}
  E_{r}^{*}\d H_{r}=\omega(\d E_{r}(-),\bd_{r}E_{r})
\end{equation*}
holds on a neighborhood of $K_{1}\sqcup K_{2}$. It follows by the isotopy extension argument that the Hamiltonian flow by $X_{H_{r}}$ on the interval $[0,s]$ takes $E_{0}$ to $E_{s}$.

The image of $E_{s}$ is just a rescaled version of $E(e^{-s}K_{1},e^{-s}K_{2})$. The shrunken images $E_{1}(e^{-s}K_{1})$ and $E_{2}(e^{-s}K_{2})$ are eventually contained in disjoint balls around $E_{1}(0)$ and $E_{2}(0)$, and hence they can be separated by a hyperplane. Thus the rescaled images $e^{s}E_{1}(e^{-s}K_{1})$ and $e^{s}E_{2}(e^{-s}K_{2})$ are also separated by a hyperplane, as desired.\hfill$\square$

\subsection{Proof of Theorem \ref{theorem:toric}}
\label{sec:proof-theorem-toric}

Write $K=K_{k,b_{1},b_{2}}$ and let $K^{c}$ denote the complement of $K$ inside of $\bd B(a)$. It suffices to prove the case $b_{1}<b_{2}$.

Since $K\subset E(b_{2},\dots,b_{2},\infty,\dots)$ and $K\subset E(\infty,\dots,\infty,a-b_{1},\dots)$, it follows that:
\begin{equation*}
  c(K)\le \min\set{a-b_{1},b_{2}}.
\end{equation*}
Now observe that $K^{c}$ splits into two components:
\begin{enumerate}
\item $K^{c}_{1}=\set{\sum_{i=1}^{k} \pi \abs{z_{i}}^{2}\le b_{1}}\cap \bd B(a)$,
\item $K^{c}_{2}=\set{\sum_{i=k+1}^{n} \pi \abs{z_{i}}^{2}\le a-b_{2}}\cap \bd B(a)$.
\end{enumerate}
We study the Hamiltonian orbit of $K^{c}$. For simplicity, suppose $b_{1}\le a-b_{2}$. First translate:
\begin{equation*}
  K^{c}_{1}\mapsto K^{c}_{1}+Te_{n}\text{ and }K^{c}_{2}\mapsto K_{2}^{c},
\end{equation*}
for a large $T$. Then rotate by element of $g\in U(n)$ applied only to the translate of $K_{1}^{c}$, where $g$ takes $Te_{n}$ to $Te_{1}$ and $e_{1}$ to $-e_{n}$; this rotation isotopy occurs in the complement of $K_{2}^{c}$ provided $T$ was large enough. Thus we see that the Hamiltonian orbit of $K^{c}$ has a representative contained entirely in the region:
\begin{equation*}
  \set{\pi \abs{z_{n}}^{2}\le a-b_{2}},
\end{equation*}
and hence $c(K^{c})\le a-b_{2}$. The other case $a-b_{2}\le b_{1}$ is similar, and we conclude $c(K_{c})\le \max\set{a-b_{2},b_{1}}$.

Using the fact that $K$ and $K^{c}$ are Poisson commuting sets, we have:
\begin{equation*}
  a=c(K\cup K^{c})\le c(K)+c(K^{c})\le  \min\set{b_{2},a-b_{1}}+\max\set{a-b_{2},b_{1}}=a,
\end{equation*}
and so all inequalities must be equalities. This proves:
\begin{equation*}
  c(K)=\min\set{b_{2},a-b_{1}},
\end{equation*}
as desired.\hfill$\square$

\subsection{Proof of Theorem \ref{theorem:chord-1-2}}
\label{sec:proof-theorem-chord-1-2}

  The construction of \cite{mohnke-annals-2001} yields two disjoint Lagrangians inside $B(1)$ of capacities arbitrarily close to the minimal length of a Reeb chord of $\Lambda_{0}\cup \Lambda_{1}$. These Lagrangians are unlinked because the Legendrians are unlinked. Thus the packing inequality holds, and we conclude the minimal length of a Reeb chord is at most $1/2$.

  To see why the Lagrangians are unlinked, recall that they are defined as: $$L_{i,R}:=\Lambda_{i}\times \bd ([0,R]\times [\delta,1])\to B(1),$$ where $\lambda$ restricts to $x\d y$ where $x,y$ are coordinates on the rectangle. The isotopy:
  \begin{equation*}
    L_{i,s}:=e^{s/2}L_{i,e^{-s}R}
  \end{equation*}
  is an exact isotopy --- the area bounded by the loop remains constant. For $s$ large enough, the projections of the Lagrangians to the contact ideal boundary enter arbitrarily small neighborhoods of the Legendrians. By assumption the Legendrians are unlinked, and so one can unlink the Lagrangians for $s$ large enough via the symplectization lift of the isotopy unlinking $\Lambda_{i}$.\hfill$\square$

\bibliographystyle{./amsalpha-doi}
\bibliography{citations}
\end{document}